\documentclass[12pt]{article}%
\usepackage{amsmath}
\usepackage{amsfonts}
\usepackage{amssymb}
\usepackage{graphicx}%
\setcounter{MaxMatrixCols}{30}
\providecommand{\U}[1]{\protect\rule{.1in}{.1in}}
\newtheorem{theorem}{Theorem}

\newtheorem{corollary}[theorem]{Corollary}

\newtheorem{definition}[theorem]{Definition}
\newtheorem{example}[theorem]{Example}

\newtheorem{lemma}[theorem]{Lemma}

\newtheorem{proposition}[theorem]{Proposition}
\newtheorem{remark}[theorem]{Remark}

\newenvironment{proof}[1][Proof]{\noindent\textbf{#1.} }{\ \rule{0.5em}{0.5em}}
\begin{document}

\title{Generating functions in Riesz spaces}
\author{Youssef Azouzi\thanks{The authors are members of the GOSAEF research group}
and Youssef Nasri \thanks{This document is the results of the research project
funded by the National Science Foundation}\\{\small Research Laboratory of Algebra, Topology, Arithmetic, and Order}\\{\small Department of Mathematics}\\COSAEF {\small Faculty of Mathematical, Physical and Natural Sciences of
Tunis}\\{\small Tunis-El Manar University, 2092-El Manar, Tunisia}}
\maketitle

\begin{abstract}
We introduce and study the concept of generating function for natural elements
in a Dedekind complete Riesz space equipped with a conditional expectatnion
operator. This allows to study discrete processes in free-measure setting. In
particular we improve a result obtained by Kuo, Vardy and Watson concerning
Poisson approximation.

\end{abstract}

\textbf{Key words. }Generating function, Dedekind complete Riesz space,
Conditional expectation, order convergence, unbounded order convergence.

\section{Introduction}

We introduce in this paper the notion of generating function for special
elements of a Riesz space called natural elements. Recently Ben Amor and
Omrani published a paper in which they treated the notion of moment generating
function in Riesz spaces; the main purpose of their paper is to get a
generalization of Chernoff Inequality in the setting of free-measure theory of
Riesz spaces \cite{L-888}. Our appraoch here is more general and leads to a
more general Chernoff Inequality. Also we are able to give a generalization of
a result obtained by Kuo, Vardy, and Watson \cite{L-02}. In these two papers
the authors studied only Bernoulli processes and sums of independent Bernoulli
processes which give Binomial processes. The main idea in our work here is to
develop a general approach to study discrete processes in Riesz spaces. For
this reason Functional Calculus via Daniell integral as they are developed in
\cite{L-06} and \cite{L-180} are more suitable than Functional Calculus as
they are developed in \cite{L-42}. This allows to work in the universal
completion $E^{u}$ of $E,$ and even in its sup-completion $E^{s}$. When
dealing with natural elements we can use their distributions or their
generating functions to define convergence in distribution. This notion can be
defined via order and unbounded order convergence and it will be useful to
note that these modes of convergence agree for sequences if the space is
universally complete; this result is due to Grobler and can be found in
\cite{L-65}. The latter paper contains several important results about
unbounded order convergence.

The paper is organized as follows. Section 2 contains some preliminaries
needed in the following sections. We prove some results concerning band
projections and functional calculus developed using Daniell integral on Riesz
spaces following Grobler \cite{L-06}. In Section 3 we develop some results on
series in Riesz spaces. We state and prove versions of Monotone Convergence
Theorem, Dominated Convergence Theorem and others and we will make a frequent
use of Fubini Theorem.

In section 4 we introduce the notion of generating function in Riesz spaces
equipped with a conditional expectation operator and study its basic
properties. We prove, in particular, that a natural element $x$ belongs to
$L^{p}\left(  T\right)  $ if and only if its generating function $g_{x}$ is an
order deravative of rank $k$ at $1.$

The last section contains some applications. In particular we give a new short
proof of a result established by Kuo, Vardy and Watson in \cite{L-02}
concernin Poisson approximation and we state and prove a Riesz space version
of Chernoff Inequality, generalizing a result obtained recently by by Ben Amor
and Omrani in \cite{L-888}.

For more information about generating function classical probability theory we
refer the reader to Feller \cite{b-34a}, \cite{b-1859}. All unexplained
terminology and notation concerning Riesz spaces an be found in standard
references \cite{b-240} and \cite{b-1089}.

\section{Preliminaries}

\begin{definition}
We say that $\left(  E,e,T\right)  $ is a conditional Riesz triple if $E$ is a
Dedekind complete Riesz space, $e$ is a weak order unit and $T$ is a
conditional expectation operator, that is, a positive order continuous
projection on $E$ satisfying $Te=e,$ with range, $R\left(  T\right)  ,$ a
Dedekind complete Riesz subspace. We will require in this paper that $T$ is
strictly positive, which means that if $f>0$ then $Tf>0$ (see \cite{L-180}).
\end{definition}

\textbf{Independence. }The concept of independence with respect to a
conditional expectation has been introduced in \cite[Definition 4.1 ]{L-33}.
See also \cite[Definition 4.1]{L-09}. We recall here some facts.

A family $(P_{\alpha})_{\alpha\in A}$ of band projections is said to be
$T$-independent if for each finite subset $\{\alpha_{1},...,\alpha_{n}\}$ of
$A$ we have $TP_{\alpha_{1}}P_{\alpha_{2}}....P_{\alpha_{n}}T=TP_{\alpha_{1}%
}TP_{\alpha_{2}}...TP_{\alpha_{2}}T,$ or equivalently,%
\[
TP_{\alpha_{1}}P_{\alpha_{2}}....P_{\alpha_{n}}e=TP_{\alpha_{1}}TP_{\alpha
_{2}}...TP_{\alpha_{2}}e.
\]

A family of closed Riesz subspaces $\left(  E_{\alpha}\right)  $ is said to be
$T$-independent if every family of band projections such that $P_{\alpha}e\in
E_{\alpha}$ for every $\alpha$ is $T$-independent.

A family of elements $(f_{i})$ in $E$ is independent with respect to $T$ and
$e$ if the family $\left\{  \left\langle f_{i},R\left(  T\right)
\right\rangle :i\in I\right\}  $ of Dedekind complete Riesz subspaces is
independent, where $\left\langle f,R\left(  T\right)  \right\rangle $ denotes
the closed Riesz subspace generated by $f$ and $R\left(  T\right)  .$ It is
worth noting that if $x,y\in L^{1}\left(  T\right)  $ are independent then
$xy\in L^{1}\left(  T\right)  $ and
\begin{equation}
T\left(  xy\right)  =TxTy. \label{Eq2}%
\end{equation}
Observe that the second member exists as $R\left(  T\right)  $ is an
$f$-subalgebra of $E^{u}$ (see \cite[Theorem 3.1]{L-180}). The formula is true
by definition if $x$ and $y$ are components, and hence it extends by linearity
for step elements, so for positive elements by Freudenthal Theorem and order
continuity of $T.$ Finally we use the linearity to extend it for all elements
in $L^{1}\left(  T\right)  .$ Using the fact that $T$ can be extended to the
whole space of $E^{s}$ (see \cite[Corollary 6]{L-900}) it easy to see that the
formula (\ref{Eq2}) holds for all elements in $E_{+}^{u}..$ Here we use the
fact that if $x$ and $y$ are $T$-independent then $x\wedge ne$ and $y\wedge
ne$ are $T$-independent as well. For more information about $T$-independence
the reader is referred to \textbf{\cite{L-09}.}

\textbf{Bands and projections}. As $E$ is Dedekind complete with weak order
unit, every band in $E$ is a principal projection band. If $P$ is the band
projection on the band $B$ then $P^{d}=I-P$ is the band projection on $B^{d}.$
The projection band generated by $x$ is denoted by $B_{x}$ and the
corresponding band projection is denoted by $P_{x}.$ We recall that for
$x,y\in E_{+}$ we have $B_{x\vee y}=B_{x+y}.$ It is easily seen that
$P_{x}^{d}P_{y}^{d}=P_{x\vee y}^{d}.$ If $x,y\in E$ we put $P_{x=y}=P_{\left(
x-y\right)  ^{+}}^{d}P_{\left(  y-x\right)  ^{+}}^{d}=P_{\left\vert
x-y\right\vert }^{d}.$ We shall adopt some notations used by Grobler (see for
example \cite{L-06}):

$P_{x\geq y}=P_{\left(  y-x\right)  ^{+}}^{d},$ $P_{x>y}=P_{\left(
x-y\right)  ^{+}},$

$P_{x\geq y,a\geq b}=P_{\left(  y-x\right)  ^{+}}^{d}P_{\left(  b-a\right)
^{+}}^{d}$ etc.

\textbf{Daniell integral. }For an element $x\in E$ we denote by $I_{x}$ the
Daniell integral with respect to the right continuous spectral system
$(e_{t}^{x})_{t\in%
\mathbb{R}
}:=(P_{x\leq te}e)_{t\in%
\mathbb{R}
}$ of $x$, initially defined on the Riesz space of $\mathcal{F}(%
\mathbb{R}
)$-step functions on $%
\mathbb{R}
$ denoted by $L=L(\mathcal{F}(%
\mathbb{R}
))$ and extended to $L_{2}(I_{x}),$ where $\mathcal{F}(%
\mathbb{R}
)$ is the algebra consisting of all finite unions of disjoint left open right
closed intervals $(a,b],(a,\infty),(-\infty,b]$ with $a,b\in%
\mathbb{R}
.$ We just recall that $I_{x}\left(  f\right)  $ is denoted by $f\left(
x\right)  $ and that $f\left(  x\right)  =e_{b}^{x}-e_{a}^{x}$ for
$f=\mathbf{1}_{(a,b]}.$ We refer the reader to \cite{L-06} and \cite{L-180}
for more details of the subject. We consider here the following spaces

$L^{\uparrow}=L^{\uparrow}(I_{x}):=\left\{  f\in\overline{%
\mathbb{R}
}^{%
\mathbb{R}
}:\exists\left(  f_{n}\right)  \subset L,\text{ }f_{n}\uparrow f\right\}  ,$

$L_{1}^{\uparrow}=L_{1}^{\uparrow}(I_{x}):=\left\{  f\in%
\mathbb{R}
_{+}^{%
\mathbb{R}
}:f\in L^{\uparrow}(I_{x})\text{ \ and \ }I_{x}(f)\in E\right\}  ,$

$L_{1}=L_{1}(I_{x}):=L_{1}^{\uparrow}(I_{x})-L_{1}^{\uparrow}(I_{x}),$

$L_{2}=L_{2}(I_{x}):=L^{\uparrow}(I_{x})-L_{1}(I_{x})=L^{\uparrow}(I_{x}%
)_{+}-L_{1}(I_{x}).$

\begin{lemma}
\label{YY3-A}Let $\left(  E,e,T\right)  $ be a conditional Riesz triple and
$x,y,w\in E.$ Then the following hold.

\begin{enumerate}
\item[(i)] $P_{\left\vert x-y\right\vert \vee(x-w)^{+}}=P_{\left\vert
x-y\right\vert \vee(y-w)^{+}}$;

\item[(ii)] $P_{x=y,x=w}=P_{x=y,y=w};$

\item[(iii)] $P_{x\geq y}P_{x=w}=P_{w\geq y}P_{x=w}$ and $P_{y\geq x}%
P_{x=w}=P_{y\geq w}P_{x=w}.$
\end{enumerate}
\end{lemma}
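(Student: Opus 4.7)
My strategy will be to reduce each identity to an equality of band projections of the form $P_u^d P_v^d = P_{u \vee v}^d$ (valid for $u,v \in E_+$), and then to check the corresponding equality of bands $B_u \vee B_v = B_{u'} \vee B_{v'}$ via the two standard facts $B_{a+b}=B_{a\vee b}=B_a \vee B_b$ for $a,b\in E_+$ together with the monotonicity principle that $a\le b+c$ with $a,b,c\in E_+$ forces $B_a \subseteq B_{b\vee c}$.

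For (i), I would begin by writing the triangle-type bound
\[
(x-w)^+ = \bigl((x-y)+(y-w)\bigr)^+ \le (x-y)^+ + (y-w)^+ \le |x-y| + (y-w)^+,
\]
which yields $B_{(x-w)^+} \subseteq B_{|x-y|} \vee B_{(y-w)^+}$ and hence $B_{|x-y|\vee(x-w)^+} \subseteq B_{|x-y|\vee(y-w)^+}$. Since $|x-y|=|y-x|$ is symmetric, exchanging $x$ and $y$ gives the reverse inclusion, so the two bands coincide and so do the associated band projections. The same idea will handle (iii): from
\[
(y-x)^+ \le (y-w)^+ + (w-x)^+ \le (y-w)^+ + |x-w|
\]
and its counterpart obtained by swapping $x$ and $w$, I get $B_{(y-x)^+\vee|x-w|}=B_{(y-w)^+\vee|x-w|}$; the second identity of (iii) follows by then swapping $x$ with $y$ throughout.

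For (ii), I plan to exploit the disjoint decomposition $|x-w|=(x-w)^+ + (w-x)^+$, which gives $B_{|x-w|}=B_{(x-w)^+}\vee B_{(w-x)^+}$ and hence
\[
B_{|x-y|}\vee B_{|x-w|} = B_{|x-y|}\vee B_{(x-w)^+}\vee B_{(w-x)^+}.
\]
Part (i) applied directly yields $B_{|x-y|}\vee B_{(x-w)^+}=B_{|x-y|}\vee B_{(y-w)^+}$, while (i) applied to the triple $(-x,-y,-w)$ (for which $|-x-(-y)|=|x-y|$, $(-x-(-w))^+=(w-x)^+$, and $(-y-(-w))^+=(w-y)^+$) yields $B_{|x-y|}\vee B_{(w-x)^+}=B_{|x-y|}\vee B_{(w-y)^+}$. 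Substituting and re-assembling $|y-w|=(y-w)^+ + (w-y)^+$ gives $B_{|x-y|}\vee B_{|x-w|}=B_{|x-y|}\vee B_{|y-w|}$, which is (ii). The main obstacle I anticipate is purely bookkeeping: keeping track of which reflection of (i) is being invoked at each step, especially for the $(w-x)^+$ summand in (ii); once the band arithmetic $P_a^d P_b^d = P_{a\vee b}^d$ is in hand, each identity collapses to a short calculation.
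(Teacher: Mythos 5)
Your overall strategy---reduce each identity to an equality of bands via $P_u^d P_v^d=P_{u\vee v}^d$ and $B_{a+b}=B_{a\vee b}$, then verify the band equality from a triangle-type inequality---is exactly the paper's. Part (i) is verbatim the paper's argument. For (ii) the paper is more direct: from $\vert x-w\vert \le \vert x-y\vert+\vert y-w\vert$ it gets $P_{\vert x-w\vert\vee\vert x-y\vert}\le P_{\vert x-y\vert\vee\vert y-w\vert}$ at once, with the converse by symmetry; your detour through the disjoint decomposition $\vert x-w\vert=(x-w)^++(w-x)^+$ and two applications of (i) is correct but longer.

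The one step that fails as written is the last clause of your treatment of (iii). Swapping $x$ with $y$ in $B_{(y-x)^+\vee\vert x-w\vert}=B_{(y-w)^+\vee\vert x-w\vert}$ produces $B_{(x-y)^+\vee\vert y-w\vert}=B_{(x-w)^+\vee\vert y-w\vert}$, whose ``anchor'' has become $\vert y-w\vert$; but the second identity of (iii), $P_{y\ge x}P_{x=w}=P_{y\ge w}P_{x=w}$, unpacks under the paper's conventions $P_{y\ge x}=P_{(x-y)^+}^d$, $P_{y\ge w}=P_{(w-y)^+}^d$, $P_{x=w}=P_{\vert x-w\vert}^d$ to $B_{(x-y)^+\vee\vert x-w\vert}=B_{(w-y)^+\vee\vert x-w\vert}$, still anchored at $\vert x-w\vert$. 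The correct symmetry is the reflection $(x,y,w)\mapsto(-x,-y,-w)$ that you already invoke in (ii)---it fixes $\vert x-w\vert$ and sends $(y-x)^+$ to $(x-y)^+$ and $(y-w)^+$ to $(w-y)^+$---or, equivalently, rerun the triangle inequality in the form $(x-y)^+\le(x-w)^++(w-y)^+\le\vert x-w\vert+(w-y)^+$ together with its counterpart. With that one-line repair the proof is complete and coincides in substance with the paper's.
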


\begin{proof}
(i) It follows from the inequality%
\[
(x-w)^{+}\leq(x-y)^{+}+(y-w)^{+}\leq\left\vert x-y\right\vert +(y-w)^{+}.
\]
that\ $B_{\left\vert x-y\right\vert \vee(x-w)^{+}}\subset B_{\left\vert
x-y\right\vert +(y-w)^{+}}=B_{\left\vert x-y\right\vert \vee(y-w)^{+}}.$ We
interchange $x$ and $y,$ obtaining the converse inclusion and so the desired
equality follows.

(ii) As $P_{x=y,x=w}=P_{\left\vert x-w\right\vert \vee\left\vert
x-y\right\vert }^{d}$ it is sufficient to show the inequality
\[
P_{\left\vert x-w\right\vert \vee\left\vert x-y\right\vert }\leq P_{\left\vert
x-y\right\vert \vee\left\vert y-w\right\vert },
\]
as the converse inequality is similar. But this follows easily from the
inequality $\left\vert x-w\right\vert \leq\left\vert x-y\right\vert
+\left\vert y-w\right\vert .$ The proof of (iii) is similar.
\end{proof}

We need in our work some results on calculus functional from the paper of
Grobler \cite{L-06} and Azouzi and Trabelsi \cite{L-180}. The next result will
be useful and can not be found in those papers and is more general than
\cite[Lemma 2.7]{L-180} and \cite[Theorem 3.9]{L-06}.

\begin{theorem}
Let $E$ be a Dedekind complete Riesz space with weak order unit $e.$

\begin{enumerate}
\item[(i)] If $f,g\in L_{2}\left(  I_{x}\right)  ^{+}$ then $\left(
fg\right)  \left(  x\right)  =f\left(  x\right)  g\left(  x\right)  .$

\item[(ii)] Assume $g\in L_{1}\left(  I_{x}\right)  ,$ $f\in L_{2}\left(
I_{x}\right)  ,$ $g\left(  t\right)  \in dom\left(  f\right)  $ for every $t.$
Then $\left(  f\circ g\right)  \left(  x\right)  =f\left(  g\left(  x\right)
\right)  \in E^{s}.$
\end{enumerate}
\end{theorem}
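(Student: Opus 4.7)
The plan for (i) is to prove the identity in increasing generality, starting with step functions and extending by monotone convergence. For $f=\mathbf{1}_{(a,b]}$ and $g=\mathbf{1}_{(c,d]}$ one has $fg=\mathbf{1}_{(a,b]\cap(c,d]}$ while $f(x)=e_{b}^{x}-e_{a}^{x}$ and $g(x)=e_{d}^{x}-e_{c}^{x}$ are linear combinations of components of $e$ which commute pairwise in the $f$-algebra $E^{u}$; the identity $e_{s}^{x}e_{t}^{x}=e_{\min(s,t)}^{x}$ then reduces the claim to a short algebraic manipulation. Bilinearity in $(f,g)$ extends the identity to arbitrary step functions in $L$. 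For $f,g\in L^{\uparrow}(I_{x})_{+}$, take $f_{n},g_{n}\in L_{+}$ with $f_{n}\uparrow f$, $g_{n}\uparrow g$; then $f_{n}g_{n}\uparrow fg$, and order continuity of $I_{x}$ on $L^{\uparrow}$ combined with order continuity of multiplication in $E^{u}$ (or in $E^{s}$ if the limits are unbounded) propagates the identity to the limit. Finally, for general $f,g\in L_{2}(I_{x})^{+}$, decompose $f=f_{1}-f_{2}$ and $g=g_{1}-g_{2}$ with $f_{i}\in L^{\uparrow}_{+}$ and $g_{i}\in L_{1}^{+}$, expand bilinearly, and apply the previous case to each of the four terms.

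For (ii) the central step is to identify the spectral system of $g(x)$:
\[
e_{t}^{g(x)}=(\mathbf{1}_{\{s\in\mathbb{R}\,:\,g(s)\leq t\}})(x)\qquad(t\in\mathbb{R}).
\]
I would verify this first for $g\in L$ directly from the explicit formula for $g(x)$ on disjoint intervals, and then extend to $g\in L_{1}(I_{x})$ by approximation, using right-continuity of the spectral system. With this formula in hand, for $f=\mathbf{1}_{(a,b]}$ one has $f(g(x))=e_{b}^{g(x)}-e_{a}^{g(x)}=(\mathbf{1}_{\{a<g\leq b\}})(x)=(f\circ g)(x)$, and linearity followed by monotone convergence from $L$ to $L^{\uparrow}_{+}$ and then to $L_{2}(I_{x})$ finishes the argument. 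The hypothesis $g(t)\in\mathrm{dom}(f)$ for every $t$ guarantees that $f\circ g$ is well defined pointwise, and the sup-completion $E^{s}$ absorbs any non-integrable output.

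The main obstacle I expect is establishing the spectral system identity for $g(x)$ in the general case $g\in L_{1}(I_{x})$: the level sets $\{g\leq t\}$ need not lie in the algebra $\mathcal{F}(\mathbb{R})$ of finite unions of half-open intervals that generate $L$, so one must first show that $\mathbf{1}_{\{g\leq t\}}$ belongs to $L_{1}^{\uparrow}(I_{x})$ and then verify that right-continuity in $t$ survives the limiting process along an approximating sequence of step functions. A related technical point is that both parts genuinely inhabit $E^{s}$: the order-continuity tools and the extension of $T$ to the sup-completion (recalled in the preliminaries) must be used systematically rather than working in $E^{u}$ alone, and in part (i) the bilinear decomposition step must be checked to produce objects still in $L_{2}(I_{x})$ so that the functional calculus applies to each summand.
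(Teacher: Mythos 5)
Your part (i) is essentially the paper's own argument: verify the identity on indicator functions using $e_{s}^{x}e_{t}^{x}=e_{s\wedge t}^{x}$, extend bilinearly to step functions, pass to monotone limits in $L^{\uparrow}(I_{x})_{+}$, and finish with the decomposition $L_{2}(I_{x})=L^{\uparrow}(I_{x})_{+}-L_{1}(I_{x})$. You carry out the bookkeeping slightly more explicitly than the paper does, but the route is the same.

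Part (ii) is where you genuinely diverge, and where the gap lies. The paper never touches the spectral system of $g(x)$: it invokes Grobler's composition theorem, which already gives $(f\circ g)(x)=f(g(x))$ for \emph{continuous} $f$, and then approximates $f$ (not $g$) monotonically and takes suprema. Your route instead makes the identity
\[
e_{t}^{g(x)}=\bigl(\mathbf{1}_{\{s\in\mathbb{R}:\,g(s)\leq t\}}\bigr)(x)
\]
the central lemma, from which the composition formula would indeed follow by the standard Daniell extension in $f$. But that lemma is the entire content of part (ii), and your sketch of its proof does not close. First, for general $g\in L_{1}(I_{x})$ nothing guarantees that $\mathbf{1}_{\{g\leq t\}}$ belongs to $L_{2}(I_{x})$: the classes $L_{1}^{\uparrow}$ and $L_{1}=L_{1}^{\uparrow}-L_{1}^{\uparrow}$ are only the first stage of the Daniell extension over $\mathcal{F}(\mathbb{R})$-step functions, and a level set of such a $g$ need not have an indicator expressible in the required monotone form; so the right-hand side of your identity is not even defined without further work. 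Second, the proposed limiting argument along step functions $g_{n}\to g$ fails on level sets: $\mathbf{1}_{\{g_{n}\leq t\}}$ does not converge pointwise to $\mathbf{1}_{\{g\leq t\}}$ (one only gets one-sided inclusions involving $\{g<t\}$ and $\{g\leq t\}$), and repairing this requires a separate treatment of the at most countably many $t$ at which $t\mapsto e_{t}^{g(x)}$ jumps. You name both obstacles honestly, but the proposal leaves them unresolved, so as written the proof of (ii) is incomplete. Either restrict the spectral identity to continuity points and recover the rest by right-continuity, or take the paper's shorter path through the continuous case.
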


\begin{proof}
(i) Assume first that $f=\mathbf{1}_{\left(  -\infty,a\right)  }$ and
$g=\mathbf{1}_{\left(  -\infty,b\right)  }.$ Then $fg=\mathbf{1}_{\left(
-\infty,a\wedge b\right)  }.$ Moreover $f\left(  x\right)  =p_{a},$ $g\left(
x\right)  =p_{b}$ and $\left(  fg\right)  \left(  x\right)  =p_{a\wedge
b}=p_{a}p_{b}.$ The formula is then true for all positive linear combination
of indicator functions.

If $f\in L^{\uparrow}$ there exists a sequence of step functions $\left(
f_{n}\right)  $ such that $f_{n}\uparrow f$ and $g_{n}\uparrow g.$ Now using
the previous step we deduce that%
\[
\left(  fg\right)  \left(  x\right)  =f\left(  x\right)  g\left(  x\right)  .
\]
This can be extended for every $f,g\in L_{2}\left(  I_{x}\right)  ^{+}.$

(ii) This is true if $f$ is continuous by \cite[Theorem 3.9]{L-06}. To see
that this is also true for every $f\in L^{\uparrow}$ it is only sufficient to
observe that for every $f\in L^{\uparrow}$ there is a sequence of positive
continuous functions $f_{n}$ such that $f_{n}\uparrow f.$ Now by taking the
supremum over $n$ in the equality $\left(  f_{n}\circ g\right)  \left(
x\right)  =f_{n}\left(  g\left(  x\right)  \right)  $ we obtain $\left(
f\circ g\right)  \left(  x\right)  =f\left(  g\left(  x\right)  \right)  .$
The result for general $f\in L_{2}\left(  I_{x}\right)  $ follows easily.
\end{proof}

\begin{lemma}
\label{YY3-B}Let $E$ be a Dedekind complete Riesz space with weak order unit
$e$ and $x,y\in E.$ Then the following statements hold.

\begin{enumerate}
\item[(i)] $P_{x=y}f\left(  x\right)  =P_{x=y}f\left(  y\right)  ,$ for all
$f\in L_{2}\left(  I_{x}\right)  .$ In particular $P_{x=y}x=P_{x=y}y.$

\item[(ii)] $P_{f(x)^{+}}g(x)=\left(  g\mathbf{1}_{(0,\infty)}\circ f\right)
(x)$ and $P_{f(x)^{-}}g(x)=\left(  g\mathbf{1}_{(-\infty,0)}\circ f\right)
(x),$ for all $f,g\in L_{1}\left(  I_{x}\right)  $\ for all $f,g\in
L_{1}(I_{x}).$
\end{enumerate}
\end{lemma}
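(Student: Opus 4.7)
The plan for (i) is the classical bootstrap along the Daniell layers. I would begin with an indicator $f=\mathbf{1}_{(a,b]}$, so that $f(x)=P_{x\leq be}e-P_{x\leq ae}e$. Lemma \ref{YY3-A}(iii), applied with the role of $y$ played by $ae$ (resp.\ $be$) and the role of $w$ played by the present $y$, yields $P_{x\leq ae}P_{x=y}=P_{y\leq ae}P_{x=y}$ and likewise with $be$; commutativity of band projections then gives $P_{x=y}f(x)=P_{x=y}f(y)$. Linearity extends this to step functions, order continuity of the band projection $P_{x=y}$ together with the stability of functional calculus under $f_{n}\uparrow f$ lifts it to $L^{\uparrow}$, and two further applications of linearity carry it across $L_{1}=L_{1}^{\uparrow}-L_{1}^{\uparrow}$ and then to $L_{2}$.

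For (ii) the idea is to translate $P_{f(x)^{+}}$ into multiplication inside the $f$-algebra $E^{u}$. Since every band projection in an $f$-algebra with unit is multiplication by its image of the unit, $P_{f(x)^{+}}$ coincides with multiplication by $p:=P_{f(x)^{+}}e$. The spectral identity $\mathbf{1}_{(0,\infty)}(y)=e-P_{y\leq 0}e=P_{y^{+}}e$ applied with $y=f(x)$ shows $p=\mathbf{1}_{(0,\infty)}(f(x))$, and the composition rule of the preceding theorem (part (ii)) identifies $\mathbf{1}_{(0,\infty)}(f(x))$ with $(\mathbf{1}_{(0,\infty)}\circ f)(x)$. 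Multiplicativity of functional calculus, i.e.\ part (i) of the same theorem, then produces
\[
P_{f(x)^{+}}g(x)=g(x)\,(\mathbf{1}_{(0,\infty)}\circ f)(x)=\bigl(g\mathbf{1}_{(0,\infty)}\circ f\bigr)(x).
\]
The second identity in (ii) is obtained verbatim with $\mathbf{1}_{(-\infty,0)}$ in place of $\mathbf{1}_{(0,\infty)}$, noting that $P_{f(x)^{-}}e=\mathbf{1}_{(-\infty,0)}(f(x))$.

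The only delicate point I anticipate is the verification on indicators in (i): one must select the right substitution in Lemma \ref{YY3-A}(iii) and then exploit commutativity of band projections before applying to $e$. Everything after that is a routine pass through the Daniell construction. In (ii) the only subtle ingredient is legitimizing the use of the composition rule for the discontinuous indicator $\mathbf{1}_{(0,\infty)}$; this is handled exactly as in the proof of the preceding theorem by writing $\mathbf{1}_{(0,\infty)}\in L^{\uparrow}$ as a monotone limit of continuous functions.
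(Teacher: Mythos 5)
Your proposal is correct and follows essentially the same route as the paper: part (i) is verified on indicator functions via Lemma \ref{YY3-A} (you invoke part (iii) where the paper uses part (i), an immaterial difference) and then lifted through the Daniell hierarchy by linearity and monotone limits, while part (ii) is the paper's one-line computation $P_{f(x)^{+}}g(x)=g(x)P_{f(x)^{+}}e=g(x)\,\mathbf{1}_{(0,\infty)}(f(x))$ with the implicit steps (band projections act as multiplication by $P_{f(x)^{+}}e$, the identity $P_{f(x)^{+}}e=(\mathbf{1}_{(0,\infty)}\circ f)(x)$, and multiplicativity) made explicit.
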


\begin{proof}
(i) We prove first the equality for \ $f=\mathbf{1}_{(-\infty,a]},$ $a\in%
\mathbb{R}
.$ But in this case $f(x)=P_{\left(  x-ae\right)  ^{+}}^{d}$ and the result
follows from Lemma \ref{YY3-A}(i). So it can be extended by linearity and then
using the continuity of order projection and monotone convergence of Daniell
integral to every function in $L_{2}(I_{x}).$

(ii)\ We have%
\[
P_{f(x)^{+}}g(x)=g(x)P_{f(x)^{+}}e=g(x).\mathbf{1}_{\left(  0,\infty\right)
}\left(  f(x)\right)  =\left(  g.\mathbf{1}_{(0,\infty)}\circ f\right)  (x),
\]
The second part is similar.
\end{proof}

\begin{remark}
It should be noted that (ii) is not true for $f$ in $L_{2}\left(
I_{x}\right)  .$ Take for example, $x=e,$ $g=1,$ $f=\infty.$ Then $f\in
L_{2}\left(  I_{x}\right)  $ and $e=P_{f(x)^{+}}g(x)\neq\left(  g\mathbf{1}%
_{(0,\infty)}\circ f\right)  \left(  x\right)  =0$.
\end{remark}

\begin{definition}
Let $\left(  E,e,T\right)  $ be a conditional Riesz triple and $x\in E.$

\begin{enumerate}
\item The function%
\[
\pi_{x}:t\in%
\mathbb{R}
\longmapsto TP_{x=te}e=F_{x}(t)-F_{x}(t^{-}).
\]
is called the $(T,e)$-mass function of $x,$ where\ $F_{x}:t\longmapsto
TP_{x\leq te}e$ \ is the $T$-distribution function of $x$ and\textbf{ }%
$F_{x}(t^{-})=\lim\limits_{s\uparrow t}F_{x}\left(  s\right)  =TP_{\left(
x-te\right)  ^{-}}e.$

\item An element $x$ of $E$ is said to be natural if $%
{\textstyle\sum\limits_{n=0}^{\infty}}
\pi_{x}(n)=e.$

\item Note that if $x$ is a natural element in $E,$ then its $T$-distribution
function $F_{x}$ is piecewise-constant. Bands $\left\{  B_{x=ne}%
:n\geq0\right\}  $ are disjoint and it follows from the equality $e=%
{\textstyle\sum\limits_{n=1}^{\infty}}
\pi_{x}\left(  n\right)  $ that $E=\left[  \left\{  B_{x=ne}:n\geq0\right\}
\right]  .$ \textit{We} derive that for every $a\in\left(  n,n+1\right)  ,$
$B_{ne<x\leq ae}=0,$ which means that $F_{x}\left(  a\right)  =F_{x}\left(
n\right)  $ for every $a\in\left(  n,n+1\right)  .$

\item In the definition above an element $x$ is said to be natural if $%
{\textstyle\sum\limits_{n\in\mathbb{N}}}
\pi_{x}(n)=e.$ As $T$ is supposed to be strictly positive this condition is
equivalent to the fact that $%
{\textstyle\sum\limits_{n\in\mathbb{N}}}
P_{x=ne}e=e.$ But as it is not common to add this assumption we will keep the
first definition.
\end{enumerate}
\end{definition}

Let $\left(  E,e,T\right)  $ be a conditional Riesz triple. We say that two
natural vectors $x$ and $y$ in $E$ have the same conditional distribution with
respect to $T$ and we write \ $x\overset{d}{=}_{T}y$ \ if they\ have the same
distribution function. Bernoulli processes and Poisson processes have been
introduced in \cite{L-02}. Each one of them is defined as a sequence of
elements in $E$ having the same distribution despite the notion of
distribution was not explicitly used. Next we define in a more general manner
these notions and then explain the link between our definitions and those
introduced in \cite{L-02}.

\begin{definition}
\label{YY3-D}We say that an element $x\in E$ has Bernoulli distribution with
parameter $0<p<e$ with respect to $T,$ and write $x\sim\mathcal{B}(p).$
whenever $\pi_{x}(1)=p$ and $\pi_{x}(0)=e-p.$ Binomial distribution with
parameters $n\in%
\mathbb{N}
$ and $p\in\left(  0,e\right)  $ and Poisson distribution with parameter $g\in
E,$ $\ g>0$ are defined respectively by their mass function%
\begin{align*}
\pi_{x}\left(  k\right)   &  =\dbinom{n}{k}p^{k}\left(  e-p\right)
^{n-k},\qquad k=0,1,2,...n,\\
\pi_{x}\left(  k\right)   &  =\dfrac{g^{k}}{k!}e^{-g},\qquad k=0,1,2,....
\end{align*}
We write $x\sim\mathcal{B}(n,p)$ and $x\sim\mathcal{P}(p).$
\end{definition}

Next we show that our definition of Bernoulli processes in vector lattices
coincides with that considered in \cite{L-02}.

\begin{proposition}
Let $\left(  E,e,T\right)  $ be a conditional Riesz triple and $x\in E.$ Then
$x\sim_{T}B(p)$ if and only if $x=Qe$\ for some band projection $Q$ with
$TQe=p.$
\end{proposition}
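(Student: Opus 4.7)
The statement is an equivalence, and I would tackle each direction separately, with the natural candidate for $Q$ being the band projection $P_{x=e}$.

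For the forward direction, assume $x \sim_T \mathcal{B}(p)$, so $\pi_x(0) + \pi_x(1) = e$. My first step is to show that $P_{x=0}$ and $P_{x=e}$ are disjoint complementary band projections. Disjointness will follow from the identity $P_a^d P_b^d = P_{a \vee b}^d$ applied to $a = |x|$ and $b = |x-e|$, together with the fact that $|x| + |x-e| \ge e$ forces $|x| \vee |x-e|$ to generate all of $E$, so $P_{|x| \vee |x-e|}^d = 0$. Complementarity will then follow by applying $T$ to the positive element $w := e - P_{x=0}e - P_{x=e}e$ and invoking strict positivity of $T$ to conclude $w = 0$. Once $P_{x=0} + P_{x=e} = I$ is in hand, Lemma \ref{YY3-B}(i) applied with $f(t) = t$ yields $P_{x=0}x = 0$ and $P_{x=e}x = P_{x=e}e$, whose sum gives $x = Qe$ for $Q := P_{x=e}$, with $TQe = \pi_x(1) = p$.

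For the converse direction, suppose $x = Qe$ with $TQe = p$. Here the plan is to read off $P_{x=e}$ and $P_{x=0}$ by direct calculation: since $0 \le Qe \le e$ we have $|x-e| = Q^d e$ and $|x| = Qe$, whose generated bands carry projections $Q^d$ and $Q$ respectively (since $Q^d e$ and $Qe$ are weak order units in $Q^d E$ and $QE$). Hence $P_{x=e} = (Q^d)^d = Q$ and $P_{x=0} = Q^d$, and applying $T$ gives $\pi_x(1) = TQe = p$ and $\pi_x(0) = TQ^d e = e - p$, which is $x \sim_T \mathcal{B}(p)$ by Definition \ref{YY3-D}.

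The principal obstacle I anticipate is the complementarity step in the forward direction — namely, upgrading the equation $\pi_x(0) + \pi_x(1) = e$ inside $R(T)$ to the band-theoretic identity $P_{x=0} + P_{x=e} = I$. This is exactly where strict positivity of $T$, part of the standing hypothesis on $(E,e,T)$, is indispensable; without it, $x$ could in principle carry additional mass in bands disjoint from $B_{x=0} \oplus B_{x=e}$ while still appearing Bernoulli through the eyes of $T$.
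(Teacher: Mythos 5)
Your proposal is correct and matches the paper's own argument in all essentials: the candidate $Q=P_{x=e}$, disjointness of $P_{x=0}$ and $P_{x=e}$, the strict-positivity-of-$T$ step upgrading $TP_{x=0}e+TP_{x=e}e=e$ to $P_{x=0}e+P_{x=e}e=e$, and the direct computation of $\pi_x(0),\pi_x(1)$ for the converse. The only cosmetic difference is that you verify disjointness directly from $P_{a}^{d}P_{b}^{d}=P_{a\vee b}^{d}$ together with $|x|+|x-e|\geq e$, whereas the paper invokes its Lemma \ref{YY3-A}(ii), whose proof is the same triangle-inequality computation.
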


\begin{proof}
If $x=Qe$, then\ $\pi_{x}(1)=TP_{x=e}e=TP_{Qe=e}e=TP_{e-Qe}^{d}e=TQe,$and
\ $\pi_{x}(0)=TP_{x=0}e=TP_{Qe=0}e=TP_{Qe}^{d}e=TQ^{d}e=e-TQe,$ Hence\ $x\sim
_{T}B(p)$ with $p=TQe.$ Conversely assume that $\ x\sim_{T}B(p).$ Then
$e=TP_{x=e}e+TP_{x=0}e=T\left(  P_{x=e}e+P_{x=0}\right)  e.$ We will show now
that $P_{x=e}e\wedge P_{x=0}e=0.$ But this follows from Lemma \ref{YY3-A}.(ii)
as $P_{x=0}P_{x=e}=P_{0=e}P_{x=e}=0.$ Hence $e\geq P_{x=e}e+P_{x=0}e;$\ which
gives%
\[
0=Te-TP_{x=e}e-TP_{x=0}e=T\left(  e-P_{x=e}e-P_{x=0}e\right)  .
\]
As $T$ is strictly positive and $e-P_{x=e}e-P_{x=0}e\geq0$ we get
$e=P_{x=e}e+P_{x=0}e.$ Thus
\[
x=P_{x=e}x+P_{x=0}x=P_{x=e}e,
\]
and, as required, the band projection $Q=P_{x=e}$ satisfies $TQe=TP_{x=e}e=p$.
\end{proof}

We end this preliminary section with a brief discussion concerning order
differentiability in Riesz spaces. We need this essentially to get in terms of
differentiability of the generating function in $1$ of the existence of a
moments of a given order for a natural element in $E.$

\begin{definition}
Let\ $J$ be an interval of $%
\mathbb{R}
$ not reduced to a point, $s_{0}$ a fixed point in $J$ and $E$ a Riesz space.
A function $f:J\longrightarrow E$ is said to be (order) differentiable at
$s_{0}$ if the order limit $\lim\limits_{s\longrightarrow s_{0}}%
\dfrac{f(s)-f(s_{0})}{s-s_{0}}$ \ exists in $E.$ This limit is called the
(order) derivative of $f$ at $s_{0}$ and is denoted by\ $f^{^{\prime}}(s_{0})$.
\end{definition}

Notions of right and left differentiability can be defined in an obvious way.
We can define by induction the nth derivative of $f$ by putting $f^{(n+1)}%
:=(f^{(n)})^{\prime}$ for $n=0,1,2,...$

For general information about Riesz spaces we refer the reader to
\cite{b-1089} and \cite{b-240}. We will make use of the notion of
sup-completion, a comprehensive presentation of the subject can be found in
\cite{L-444} and \cite{L-900}.

\section{Series in Riesz spaces}

We will prove in this section some results concerning series in Riesz spaces.
We consider only convergence in order, so $%
{\textstyle\sum\limits_{n=1}^{\infty}}
x_{n}$ means the order limit of the sequence $\left(  S_{n}\right)  ,$ where
$S_{n}=%
{\textstyle\sum\limits_{k=1}^{n}}
x_{k}.$ We say that the series $\sum x_{n}$ is absolutely convergent if the
series $\sum\left\vert x_{n}\right\vert $ is order convergent. In $\sigma
$-order complete Riesz spaces every absolutely convergent series is order
convergent. In particular if $\left\vert x_{n}\right\vert \leq y_{n}$ for each
$n$ and if the series $\sum y_{n}$ is order convergent, then $\sum x_{n}$ is
order convergent as well. When $\left(  f_{n}\right)  $ is a sequence of
functions from an interval $I$ to a Riesz space $E,$ we say that $\sum f_{n}$
is order convergent uniformly on $t$ if the series $\sum f_{n}\left(
t\right)  $ converges in order for every $t\in I$ and there is a net $\left(
y_{\alpha}\right)  $ such that $y_{\alpha}\downarrow0$ and for every $\alpha$
there exists $n_{0}$ such that $\left\vert \sum\limits_{k=n}^{\infty}%
f_{n}\left(  t\right)  \right\vert \leq y_{\alpha}$ for all $n\geq n_{0}.$ We
are interested in this work only in Dedekind complete Riesz spaces; in such
spaces the former property is equivalent to the fact that $\sup\limits_{n\geq
k}\sup\limits_{t\in I}\left\vert \sum\limits_{j=n}^{\infty}f_{j}\left(
t\right)  \right\vert \downarrow0$ as $k\longrightarrow\infty.$ The following
is a version of Monotone Convergence Theorem for series in Riesz spaces.

\begin{theorem}
\label{MON}Let $E$ be a Dedekind complete Riesz space and $\left(  x_{\alpha
}\left(  n\right)  \right)  _{\alpha\in\Gamma}$ be a sequence of increasing
net in $E_{+}^{s}.$ Then
\[
\lim\limits_{\alpha}%
{\textstyle\sum\limits_{n=1}^{\infty}}
x_{\alpha}\left(  n\right)  =%
{\textstyle\sum\limits_{n=1}^{\infty}}
\lim\limits_{\alpha}x_{\alpha}\left(  n\right)  =%
{\textstyle\sum\limits_{n=1}^{\infty}}
x\left(  n\right)  .
\]

\end{theorem}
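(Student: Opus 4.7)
The plan is to reduce the statement to a standard interchange of two monotone suprema. Since each net $(x_\alpha(n))_{\alpha\in\Gamma}$ is increasing in $E_+^s$, order convergence coincides with supremum, so $x(n)=\sup_\alpha x_\alpha(n)$. Set the partial sums $S_N^\alpha:=\sum_{n=1}^{N}x_\alpha(n)$ and $S_N:=\sum_{n=1}^{N}x(n)$. Observe that $S_N^\alpha$ is increasing in $N$ (because the summands lie in $E_+^s$) and increasing in $\alpha$ (because each $x_\alpha(n)$ is increasing in $\alpha$ and finite sums preserve order). Hence $(S_N^\alpha)$ is a doubly indexed net monotone in both indices, and the series $\sum_{n=1}^{\infty}x_\alpha(n)=\sup_N S_N^\alpha$ exists in $E_+^s$ since the sup-completion admits suprema of arbitrary majorized (or bounded-below increasing) families of positive elements.

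First I would establish the "finite $N$" case: for each fixed $N$,
\[
\sup_\alpha S_N^\alpha \;=\; \sup_\alpha \sum_{n=1}^{N}x_\alpha(n)\;=\;\sum_{n=1}^{N}\sup_\alpha x_\alpha(n)\;=\;S_N.
\]
This is the order continuity of finite addition for increasing positive nets, which holds in any Dedekind complete Riesz space and extends to $E^s$ by the construction of the sup-completion (see \cite{L-444,L-900}). In other words, taking suprema commutes with summing a fixed finite number of increasing positive nets.

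Next I would invoke the interchange of iterated monotone suprema. For any doubly indexed family $(y_{\alpha,N})$ in $E_+^s$ that is monotone in each index separately, one has
\[
\sup_\alpha \sup_N y_{\alpha,N}\;=\;\sup_N \sup_\alpha y_{\alpha,N},
\]
both sides equalling the supremum of the whole family (this is a routine consequence of the definition of supremum and the monotonicity in each variable). Applying this to $y_{\alpha,N}=S_N^\alpha$ gives
\[
\lim_\alpha \sum_{n=1}^{\infty}x_\alpha(n)\;=\;\sup_\alpha\sup_N S_N^\alpha\;=\;\sup_N\sup_\alpha S_N^\alpha\;=\;\sup_N S_N\;=\;\sum_{n=1}^{\infty}x(n),
\]
which is the desired conclusion.

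The only real subtlety is the first step, namely that finite addition is order continuous for increasing nets of positive elements in $E^s$; once that is granted, the rest is bookkeeping. The statement that every increasing net of positive elements in $E^s$ has a supremum, and that this supremum is compatible with finite sums, is part of the defining properties of the sup-completion recalled in \cite{L-900}, so I would simply cite it rather than re-prove it. The interchange of iterated suprema is then purely order-theoretic and requires nothing more than the definition of $\sup$.
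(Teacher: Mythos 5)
Your proposal is correct and follows essentially the same route as the paper: both reduce the claim to the identity $\sup_N\sup_\alpha\sum_{n=1}^{N}x_\alpha(n)=\sup_\alpha\sup_N\sum_{n=1}^{N}x_\alpha(n)$, using that limits of increasing nets in $E_+^s$ are suprema, that finite sums commute with suprema of increasing positive nets, and that iterated suprema commute. Your write-up merely spells out the two commuting steps that the paper's one-line chain of equalities treats as recalled facts.
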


\begin{proof}
Recall that suprema commute in $E_{+}^{s}$ and that the limit of an increasing
net is its supremum. As $x_{\alpha}\left(  n\right)  $ is positive for all
$n,\alpha$ we have%
\begin{align*}%
{\textstyle\sum\limits_{n=1}^{\infty}}
x\left(  n\right)   &  =\sup\limits_{N}%
{\textstyle\sum\limits_{n=1}^{N}}
x\left(  n\right)  =\sup\limits_{N}\sup\limits_{\alpha}%
{\textstyle\sum\limits_{n=1}^{N}}
x_{\alpha}\left(  n\right) \\
&  =\sup\limits_{\alpha}\sup\limits_{N}%
{\textstyle\sum\limits_{n=1}^{N}}
x_{\alpha}\left(  n\right)  =\sup\limits_{\alpha}%
{\textstyle\sum\limits_{n=1}^{\infty}}
x_{\alpha}\left(  n\right) \\
&  =\lim\limits_{\alpha}%
{\textstyle\sum\limits_{n=1}^{\infty}}
x_{\alpha}\left(  n\right)  ,
\end{align*}
and we are done.
\end{proof}

Next we state a Dominated Convergence Theorem for series in Riesz spaces.

\begin{theorem}
\label{DOM}Let $\left(  x_{\alpha}\left(  n\right)  \right)  _{\alpha\in
\Gamma}$ be a sequence of nets in $E$, where $E$ is a Dedekind complete Riesz
space. We assume that the following conditions are satisfied.

\begin{enumerate}
\item[(i)] $x_{\alpha}\left(  n\right)  \overset{o}{\longrightarrow}x\left(
n\right)  $ for each $n.$

\item[(ii)] $\left\vert x_{\alpha}\left(  n\right)  \right\vert \leq y\left(
n\right)  $ for every $\alpha,$ with $%
{\textstyle\sum\limits_{n=1}^{\infty}}
y\left(  n\right)  \in E^{u}.$\newline Then $\lim\limits_{\alpha}%
{\textstyle\sum\limits_{n=1}^{\infty}}
x_{\alpha}\left(  n\right)  =%
{\textstyle\sum\limits_{n=1}^{\infty}}
\lim\limits_{\alpha}x_{\alpha}\left(  n\right)  .$
\end{enumerate}
\end{theorem}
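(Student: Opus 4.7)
The plan is to run the classical dominated-convergence argument in the Riesz-space setting: reduce the convergence of the series to a single dominating net that decreases to $0$, and then produce that net by an application of Theorem~\ref{MON}. Write $z_\alpha := \sum_{n=1}^\infty x_\alpha(n)$ and $z := \sum_{n=1}^\infty x(n)$; both live in $E^u$ because $|x_\alpha(n)|, |x(n)| \leq y(n)$ and $\sum y(n)\in E^u$. The goal is to exhibit a net $v_\alpha\downarrow 0$ with $|z_\beta-z|\leq v_\alpha$ for every $\beta\geq\alpha$, which is exactly the order convergence $z_\alpha\overset{o}{\longrightarrow} z$ in the Dedekind complete ambient $E^u$.

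First, for each fixed $n$, I would introduce
\[
u_\alpha(n) \;:=\; \bigvee_{\beta \geq \alpha} |x_\beta(n) - x(n)|.
\]
This supremum exists in $E$ because $|x_\beta(n)-x(n)|\leq 2y(n)$ and $E$ is Dedekind complete. By hypothesis (i) together with the standard characterization of order convergence in Dedekind complete spaces, $u_\alpha(n)\downarrow_\alpha 0$ for every $n$, while still $u_\alpha(n)\leq 2y(n)$. The candidate dominating net is then
\[
v_\alpha \;:=\; \sum_{n=1}^\infty u_\alpha(n),
\]
which converges in $E^u$ by termwise domination and is decreasing in $\alpha$.

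The crux is to establish $v_\alpha\downarrow_\alpha 0$, that is, to interchange the sup in $\alpha$ with the sum in $n$; this is the main obstacle, since Theorem~\ref{MON} is phrased for increasing nets while $u_\alpha(n)$ decreases in $\alpha$. The trick is to flip direction: the net $2y(n)-u_\alpha(n)$ is increasing in $\alpha$ and converges upward to $2y(n)$ for every $n$, so Theorem~\ref{MON} yields
\[
\sum_{n=1}^\infty \bigl(2y(n) - u_\alpha(n)\bigr) \;\uparrow_\alpha\; 2\sum_{n=1}^\infty y(n).
\]
Since $2\sum y(n)$ is a finite element of $E^u$, linearity allows subtraction and delivers $v_\alpha\downarrow_\alpha 0$.

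To conclude, for every $\beta\geq\alpha$ the order continuity of the modulus applied to partial sums gives
\[
|z_\beta - z| \;\leq\; \sum_{n=1}^\infty |x_\beta(n) - x(n)| \;\leq\; \sum_{n=1}^\infty u_\alpha(n) \;=\; v_\alpha,
\]
which together with $v_\alpha\downarrow 0$ produces $z_\alpha\overset{o}{\longrightarrow} z$, as required. The whole argument leans on the hypothesis $\sum y(n)\in E^u$: it is what legitimates the existence of the sup $u_\alpha(n)$, the finiteness of $v_\alpha$, and the subtraction step that powers the monotone convergence application.
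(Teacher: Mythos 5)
Your proof is correct, and it takes a genuinely different route from the paper's. You construct the explicit dominating net $v_\alpha=\sum_{n}u_\alpha(n)$ with $u_\alpha(n)=\sup_{\beta\geq\alpha}|x_\beta(n)-x(n)|$ and prove $v_\alpha\downarrow 0$ by complementation: you pass to the increasing nets $2y(n)-u_\alpha(n)\uparrow_\alpha 2y(n)$, apply Theorem~\ref{MON}, and subtract --- the classical derivation of dominated convergence as a formal consequence of monotone convergence, legitimate here precisely because $\sum_n y(n)$ is a finite element of $E^u$. The paper does not invoke Theorem~\ref{MON} at all; it argues by truncation, splitting $\sum_n|x_\alpha(n)-x(n)|$ into a head $\sum_{n\leq N}$, whose $\limsup_\alpha$ vanishes as a finite sum of order-null nets, and a tail $\sum_{n>N}$, dominated uniformly in $\alpha$ by $2\sum_{n>N}y(n)$, and then lets $N\to\infty$. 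Your route buys a cleaner witness of order convergence (a single decreasing net $v_\alpha$ with $|z_\beta-z|\leq v_\alpha$ for all $\beta\geq\alpha$) and displays the logical dependence of Theorem~\ref{DOM} on Theorem~\ref{MON}; the paper's route is more elementary and self-contained. Two points you rely on and correctly flag, but which deserve to stay explicit in a final write-up: the identification of order convergence of the order-bounded net $|x_\beta(n)-x(n)|$ with $\inf_\alpha u_\alpha(n)=0$ uses the bound $2y(n)$ and Dedekind completeness, and the splitting $\sum_n\bigl(2y(n)-u_\alpha(n)\bigr)=2\sum_n y(n)-v_\alpha$ uses the convergence of both series in $E^u$.
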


\begin{proof}
It is sufficient to show that%
\[
\limsup\limits_{\alpha}\left\vert
{\textstyle\sum\limits_{n=1}^{\infty}}
x_{\alpha}\left(  n\right)  -%
{\textstyle\sum\limits_{n=1}^{\infty}}
x\left(  n\right)  \right\vert =0.
\]
Observe first that $\left\vert x\left(  n\right)  \right\vert \leq y\left(
n\right)  ,$ so the series $\sum x\left(  n\right)  $ is order convergent. Fix
an integer $N.$ Then it is clear that%
\[
\lim\limits_{\alpha}%
{\textstyle\sum\limits_{n=1}^{N}}
\left\vert x_{\alpha}\left(  n\right)  -x\left(  n\right)  \right\vert =0.
\]
It is enough to prove that%
\[
A:=\limsup\limits_{\alpha}\left\vert
{\textstyle\sum\limits_{n=1}^{\infty}}
x_{\alpha}\left(  n\right)  -%
{\textstyle\sum\limits_{n=1}^{\infty}}
x\left(  n\right)  \right\vert =0.
\]
For this end observe that%
\begin{align*}
A  &  \leq\limsup\limits_{\alpha}%
{\textstyle\sum\limits_{n=1}^{\infty}}
\left\vert x_{\alpha}\left(  n\right)  -x\left(  n\right)  \right\vert \\
&  \leq\limsup\limits_{\alpha}%
{\textstyle\sum\limits_{n=N+1}^{\infty}}
\left\vert x_{\alpha}\left(  n\right)  -x\left(  n\right)  \right\vert
+\limsup\limits_{\alpha}%
{\textstyle\sum\limits_{n=1}^{N}}
\left\vert x_{\alpha}\left(  n\right)  -x\left(  n\right)  \right\vert \\
&  \leq2%
{\textstyle\sum\limits_{n=N+1}^{\infty}}
2\left\vert y\left(  n\right)  \right\vert .
\end{align*}
As this happens for every $N,$ we get $A=0$ as required.
\end{proof}

We end this section by a version of Fubini Theorem in Riesz spaces. For a
family $\left(  x_{i}\right)  _{i\in J}$ of positive elements in $E$ we write:%
\[%
{\textstyle\sum\limits_{i\in I}}
x_{i}=\sup\limits_{F\text{finite }\subset I}%
{\textstyle\sum\limits_{k\in F}}
x_{k}\in E^{s}.
\]

The following properties are very similar to the real case and their proofs
will be omitted.

\begin{theorem}
Let $E$ be a Dedekind complete Riesz space and $\left(  x_{i}\right)  _{i\in
I}$ a family in $E_{+}^{s}.$ Then the following hold.

\begin{enumerate}
\item If $I=\mathbb{N}$ then $\sum_{i\in I}x_{i}=\lim\limits_{n\longrightarrow
\infty}%
{\textstyle\sum\limits_{k=1}^{n}}
x_{k}.$

\item If $\left(  J_{\alpha}\right)  _{\alpha}is$ a partition of $I$ then $%
{\textstyle\sum\limits_{i\in I}}
x_{i}=%
{\textstyle\sum\limits_{\alpha}}
{\textstyle\sum\limits_{i\in J_{\alpha}}}
x_{i}.$ In particular we have%
\begin{equation}%
{\textstyle\sum\limits_{\left(  i,j\right)  \in\mathbb{N}^{2}}}
x_{i,j}=%
{\textstyle\sum\limits_{i\in\mathbb{N}}}
\left(
{\textstyle\sum\limits_{j\in\mathbb{N}}}
x_{i,j}\right)  =%
{\textstyle\sum\limits_{j\in\mathbb{N}}}
{\textstyle\sum\limits_{i\in\mathbb{N}}}
x_{i,j}. \label{Eq1}%
\end{equation}

\item In the last statement if the family is in $E$ then if $%
{\textstyle\sum\limits_{\left(  i,j\right)  \in\mathbb{N}^{2}}}
\left\vert x_{i,j}\right\vert \in E^{u}$ then the formula (\ref{Eq1}) holds.
\end{enumerate}
\end{theorem}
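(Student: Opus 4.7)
The three parts are standard once one pins down what ``suprema commute in $E_{+}^{s}$'' buys us. The plan is to handle them in the stated order, reducing everything to manipulations of suprema of finite partial sums.

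For part (1), I would observe that because each $x_{i}\geq 0$, the partial sums $S_{n}=\sum_{k=1}^{n}x_{k}$ form an increasing sequence in $E_{+}^{s}$, and so $\lim_{n}S_{n}=\sup_{n}S_{n}$. On the other hand, every finite $F\subset \mathbb{N}$ is contained in some $\{1,\dots,N\}$, hence $\sum_{k\in F}x_{k}\leq S_{N}$, so cofinality gives $\sup_{F\text{ fin.}}\sum_{k\in F}x_{k}=\sup_{n}S_{n}$. This identifies $\sum_{i\in I}x_{i}$ with $\lim_{n}S_{n}$.

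For part (2), write $S=\sum_{i\in I}x_{i}$ and $U=\sum_{\alpha}\sum_{i\in J_{\alpha}}x_{i}$. The inequality $S\leq U$ is straightforward: any finite $F\subset I$ decomposes into finitely many pieces $F\cap J_{\alpha}$, so $\sum_{k\in F}x_{k}=\sum_{\alpha}\sum_{k\in F\cap J_{\alpha}}x_{k}\leq U$, and taking sup over $F$ gives $S\leq U$. For the reverse, fix a finite set $G$ of indices and finite subsets $F_{\alpha}\subset J_{\alpha}$ for $\alpha\in G$. Then $\bigcup_{\alpha\in G}F_{\alpha}$ is a finite subset of $I$, giving $\sum_{\alpha\in G}\sum_{k\in F_{\alpha}}x_{k}\leq S$. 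Here is where the sup-completion matters: since suprema commute in $E_{+}^{s}$, I can successively push each $\sup_{F_{\alpha}}$ past the finite sum $\sum_{\alpha\in G}$ to obtain $\sum_{\alpha\in G}\sum_{i\in J_{\alpha}}x_{i}\leq S$, and then take sup over the finite sets $G$ to conclude $U\leq S$. Equation (\ref{Eq1}) is then the special case $I=\mathbb{N}^{2}$ with the row and column partitions, reduced to ordinary series by part (1).

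For part (3), I would decompose $x_{i,j}=x_{i,j}^{+}-x_{i,j}^{-}$ and apply part (2) to the two positive families. The hypothesis $\sum_{(i,j)}|x_{i,j}|\in E^{u}$ ensures that both $\sum_{(i,j)}x_{i,j}^{\pm}$ lie in $E^{u}$ (not merely $E^{s}$), so all relevant iterated and unordered sums are order limits in $E^{u}$ and the difference makes sense there. Linearity of the order limit on $E^{u}$ then transports the identity (\ref{Eq1}) from the positive parts to $x_{i,j}$.

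The only genuine subtlety is the step in part (2) where I interchange the finite outer sum with the suprema defining $\sum_{i\in J_{\alpha}}x_{i}$; this is exactly the assertion that addition in $E_{+}^{s}$ distributes over arbitrary suprema, which the paper has already invoked in the proof of Theorem \ref{MON}. Everything else is bookkeeping of finite subsets and the decomposition into positive and negative parts.
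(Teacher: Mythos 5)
The paper does not actually prove this theorem: the authors state that the properties ``are very similar to the real case and their proofs will be omitted,'' so there is no argument to compare yours against. Your proof is correct and is precisely the standard argument the authors are alluding to: part (1) by cofinality of initial segments among finite subsets, part (2) by the two inequalities between the unordered sum and the iterated sum, and part (3) by splitting into positive and negative parts and working in $E^{u}$, where the domination by $\sum_{(i,j)}\left\vert x_{i,j}\right\vert$ guarantees that all the sums involved are honest elements of a Dedekind complete Riesz space and subtraction is legitimate. You also correctly isolate the one point that is genuinely specific to the sup-completion, namely that a fixed summand can be pulled out of a supremum. One small caution there: what is actually available (and what the proof of Theorem \ref{MON} also relies on) is order continuity of addition along \emph{increasing} (equivalently, upward directed) nets in $E_{+}^{s}$, i.e.\ $x_{\beta}\uparrow x$ implies $x_{\beta}+y\uparrow x+y$; phrasing it as distributivity over ``arbitrary suprema'' overstates what is needed and what is guaranteed. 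Since the family of finite partial sums over finite subsets of $J_{\alpha}$ is upward directed, the directed version is exactly what your argument uses, so no gap results--just tighten the wording.
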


\section{Generating function}

We assume throughout that $\left(  E,e,T\right)  $ is a conditional Riesz triple.

\begin{definition}
Let $x$ be a $T$-natural vector in $E.$

\begin{enumerate}
\item[(i)] The $T$-generating function of $x$, is the function $g_{x}%
:[0,\infty)\longrightarrow E^{s}$ defined by%
\[
g_{x}\left(  s\right)  =%
{\textstyle\sum\limits_{n\geq0}}
s^{n}TP_{x=ne}e=%
{\textstyle\sum\limits_{n\geq0}}
s^{n}\pi_{x}(n).
\]

\item[(ii)] The generalized $T$-generating function of $x$ is the function
$\widetilde{g}_{x}:E_{+}\longrightarrow E^{s}$ defined by%
\[
\widetilde{g}_{x}\left(  u\right)  =%
{\textstyle\sum\limits_{n\geq0}}
TP_{x=ne}u^{n}=%
{\textstyle\sum\limits_{n\geq0}}
\pi_{x}(n)u^{n}.
\]

\end{enumerate}
\end{definition}

\begin{example}
We refer to Example \ref{YY3-D} for the notations below.

\begin{enumerate}
\item[(i)] If $x\sim B(p)$ then $g_{x}(s)=sp+e-p.$

\item[(ii)] If $x\sim B(n,p)$ then $g_{x}(s)=\left(  sp+e-p\right)  ^{n}.$

\item[(iii)] If $x\sim\mathcal{P}\left(  p\right)  $ then $g_{x}\left(
s\right)  =\exp(\left(  s-1\right)  g).$
\end{enumerate}
\end{example}

The following result collects basic properties of generating functions.

\begin{proposition}
\label{YY3-C}Let $\left(  E,e,T\right)  $ be a conditional Riesz triple and
let $x$ be a natural element in $E.$ Then the following hold.

\begin{enumerate}
\item[(i)] $g_{x}(0)=\pi_{x}(0),$ $g_{x}(1)=e$ and $g_{x}(s)=\widetilde{g}%
_{x}(se);$

\item[(ii)] $0\leq g_{x}(s)\leq e,$ \ \ $0\leq s\leq1.$ and $0\leq
\widetilde{g}_{x}(u)\leq e,$\ $0\leq u\leq e.$

\item[(iii)] $g_{x}(s)=Ts^{x},$ for all $s\geq0.$

\item[(iv)] $g_{x}$ and $\widetilde{g}_{x}$ \ are non-decreasing. Moreover, if
\ $x>0$ \ then $g_{x}$ and $\widetilde{g}_{x}$ are strictly increasing.

\item[(v)] $g_{x}$ and $\widetilde{g}_{x}$ are convex. Moreover, if $TP_{x\leq
e}e<e$ then $g_{x}$ \ and $\widetilde{g}_{x}$ are strictly convex.
\end{enumerate}
\end{proposition}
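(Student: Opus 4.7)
Items (i) and (ii) follow directly from the defining series. At $s=0$ only the $n=0$ term survives (with the convention $0^{0}=1$), giving $g_{x}(0)=\pi_{x}(0)$; at $s=1$ the series reduces to $\sum_{n}\pi_{x}(n)=e$ by naturality; and $g_{x}(s)=\widetilde{g}_{x}(se)$ follows because $e$ is the multiplicative identity of the $f$-algebra $E^{u}$, so $(se)^{n}=s^{n}e$ and $\pi_{x}(n)(se)^{n}=s^{n}\pi_{x}(n)$. For (ii), I bound termwise: for $s\in[0,1]$ one has $0\le s^{n}\pi_{x}(n)\le \pi_{x}(n)$, and summing against $\sum_{n}\pi_{x}(n)=e$ yields $0\le g_{x}(s)\le e$; the argument for $\widetilde{g}_{x}$ is identical, using $u^{n}\le e^{n}=e$ for $0\le u\le e$.

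The central step is (iii). Because $x$ is natural, item (3) of the definition guarantees that the right-continuous spectral system $(e_{t}^{x})$ is piecewise-constant and jumps only at nonnegative integers, with $e_{n}^{x}-e_{n-1}^{x}=P_{x=ne}e$. Applying the Daniell functional calculus of \cite{L-06,L-180} to $f(t)=s^{t}$ and using this discrete atomic structure gives $s^{x}=\sum_{n\ge 0}s^{n}P_{x=ne}e$ in $E^{s}_{+}$. Order continuity of $T$ together with Theorem \ref{MON} then justifies $Ts^{x}=\sum_{n\ge 0}s^{n}TP_{x=ne}e=g_{x}(s)$.

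For (iv) and (v) I lift scalar inequalities termwise. If $0\le s\le t$ then $s^{n}\le t^{n}$, hence $s^{n}\pi_{x}(n)\le t^{n}\pi_{x}(n)$, and summation yields $g_{x}(s)\le g_{x}(t)$; the case of $\widetilde{g}_{x}$ is the same. Under $x>0$, applying strict positivity of $T$ to the decomposition $x=\sum_{n}nP_{x=ne}e$ (valid because $E=[\{B_{x=ne}:n\ge 0\}]$ by naturality) gives $Tx=\sum_{n}n\pi_{x}(n)>0$, so some $\pi_{x}(n_{0})>0$ with $n_{0}\ge 1$; then for $s<t$, $g_{x}(t)-g_{x}(s)\ge (t^{n_{0}}-s^{n_{0}})\pi_{x}(n_{0})>0$. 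Convexity follows termwise from the scalar convexity of $t\mapsto t^{n}$: $(\lambda s+(1-\lambda)t)^{n}\le \lambda s^{n}+(1-\lambda)t^{n}$. For strict convexity under $TP_{x\le e}e<e$, I rewrite the hypothesis as $\pi_{x}(0)+\pi_{x}(1)<e$ (using $P_{x\le e}e=P_{x=0}e+P_{x=e}e$ for natural $x$), which forces some $\pi_{x}(n_{0})>0$ with $n_{0}\ge 2$; the strict convexity of $t\mapsto t^{n_{0}}$ combined with positivity of $\pi_{x}(n_{0})$ then yields the strict convexity of $g_{x}$.

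The principal obstacle is the functional-calculus identity underlying (iii): justifying that $s^{x}=\sum_{n}s^{n}P_{x=ne}e$ in the Daniell sense. This requires approximating $t\mapsto s^{t}$ by $\mathcal{F}(\mathbb{R})$-step functions on the spectrum of $x$, exploiting the piecewise-constant nature of $F_{x}$ from the naturality hypothesis, and verifying term-by-term order convergence using the monotone structure built into $L^{\uparrow}$ and $L_{1}$. Once this reduction to a discrete atomic sum is in place, the remaining items are essentially bookkeeping on series in $E^{s}_{+}$ governed by Theorems \ref{MON} and \ref{DOM}.
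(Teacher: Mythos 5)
Your argument is correct and follows essentially the paper's own route: (i), (ii) and (iv) are read off the defining series exactly as the paper does (it dismisses them as obvious), and your treatment of (v) --- extracting some $\pi_{x}(n_{0})>0$ with $n_{0}\geq2$ from $TP_{x\leq e}e<e$ and invoking strict convexity of $t\longmapsto t^{n_{0}}$ --- is precisely the paper's. The one point worth flagging is (iii): the identity $s^{x}=\sum_{n\geq0}s^{n}P_{x=ne}e$ that you single out as the ``principal obstacle'' and propose to establish by approximating $t\longmapsto s^{t}$ with $\mathcal{F}(\mathbb{R})$-step functions is already an immediate consequence of Lemma \ref{YY3-B}(i): taking $y=ne$ and $f(t)=s^{t}$ there gives $P_{x=ne}s^{x}=P_{x=ne}s^{ne}=s^{n}P_{x=ne}e$, and summing over $n$ (using that $\sum_{n}P_{x=ne}$ acts as the identity, since naturality gives $E=\left[\left\{B_{x=ne}:n\geq0\right\}\right]$) yields the claim; applying $T$ with its order continuity and Theorem \ref{MON} then finishes exactly as you say. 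So no fresh Daniell-integral approximation is needed --- the atomic decomposition you want is the content of a lemma already proved earlier in the paper, and invoking it turns your hardest step into a one-liner.
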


\begin{proof}
(i), (ii) and (iv) are obvious.

(iii) As $s^{ne}=s^{n}e,$ it follows from Lemma \ref{YY3-B} that
$P_{x=ne}s^{n}e=P_{x=ne}s^{x}.$ Hence%
\begin{align*}
g_{x}\left(  s\right)   &  =%
{\textstyle\sum\limits_{n=1}^{\infty}}
TP_{x=ne}s^{n}e=%
{\textstyle\sum\limits_{n=1}^{\infty}}
TP_{x=ne}s^{x}\\
&  =T\left[  \left(
{\textstyle\sum\limits_{n=1}^{\infty}}
P_{x=ne}\right)  s^{x}\right]  =Ts^{x}.
\end{align*}

(v) This follows from the fact that the map $s\longmapsto s^{n}$ is convex for
$n\geq0$ and strictly convex for $n\geq2$. Observe that the condition
$TP_{x\leq e}e<e$ means that $\pi_{x}\left(  n\right)  >0$ for some $n\geq2.$
\end{proof}

Chernoff Inequality is stated in general in terms of exponential generating
functions (see for instance \cite{b-3075}). We state a version of Chernoff
Inequality in Riesz spaces.

\begin{theorem}
\label{YY3-K}(Chernoff Bounds) Let $\left(  E,e,T\right)  $ be a Riesz
conditional triple, $x$ a natural element in\ $E$ and $u\in R\left(  T\right)
.$ Then the following hold.%
\[
TP_{x\geq u}e\leq s^{u}g_{x}(s),\ \ \forall s>1,\
\]%
\[
TP_{x\leq u}e\leq s^{-u}g_{x}(s),\ \ \forall0<s<1.
\]

\end{theorem}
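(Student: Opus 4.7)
The plan is to carry out the classical Markov--Chernoff argument inside $E^{s}$, with the functional-calculus element $s^{x}$ replacing the scalar exponential moment. The key ingredients are Proposition~\ref{YY3-C}(iii), which identifies $g_{x}(s)=Ts^{x}$; the multiplicativity of the functional calculus (the Theorem after Lemma~\ref{YY3-A}), which delivers the factorization $s^{x}=s^{u}\cdot s^{x-u}$; the closure of $R(T)$ under continuous functional calculus, so that $u\in R(T)$ forces $s^{u}\in R(T)$; and the $R(T)$-module property $T(fz)=f\,Tz$ for $f\in R(T)$, together with the fact that band projections commute with multiplication in $E^{u}$.

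First I would treat $s>1$. Set $y:=x-u$, so that $B_{y\geq 0}=B_{x\geq u}$. Since $t\mapsto s^{t}$ is strictly increasing on $\mathbb{R}$, functional calculus gives $s^{y}\geq e$ on this band, i.e.\ $P_{x\geq u}s^{y}\geq P_{x\geq u}e$. Multiplying by $s^{u}\geq 0$ and using $s^{x}=s^{u}s^{y}$ together with the commutation of $P_{x\geq u}$ and multiplication by $s^{u}$ in $E^{u}$ yields
\[
P_{x\geq u}\,s^{x}\;\geq\;s^{u}\,P_{x\geq u}\,e.
\]
Because $s^{x}\geq 0$ I can drop the projection on the left, obtaining $s^{x}\geq s^{u}\,P_{x\geq u}\,e$. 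Applying the positive operator $T$ and invoking the $R(T)$-module property to pull $s^{u}$ outside gives $g_{x}(s)=Ts^{x}\geq s^{u}\,TP_{x\geq u}e$, which rearranges to the claimed bound.

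The case $0<s<1$ is treated symmetrically: now $t\mapsto s^{t}$ is strictly decreasing, so on $B_{x\leq u}$ one still has $s^{y}\geq e$ (because $y\leq 0$), and the same three-line chain produces $s^{u}\,TP_{x\leq u}e\leq g_{x}(s)$, i.e.\ the second stated inequality. The only delicate point --- and the one place where the Riesz-space proof departs from the scalar one --- is the functional-calculus bookkeeping: one must verify that $s^{x}$, $s^{u}$ and their product are meaningful in $E^{s}$ (in $E^{u}$ whenever $g_{x}(s)\in E$), that the exponential identity $s^{x}=s^{u}s^{x-u}$ really transfers through the functional calculus, and that $s^{u}$ genuinely lies in $R(T)$ so it can be pulled through both the band projection and $T$. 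Once these items are in place the argument collapses to a one-line order-Markov inequality, exactly as in the scalar case.
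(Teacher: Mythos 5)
Your strategy is the classical exponential--Markov argument, and it is sound in outline, but it is a genuinely different route from the paper's and it leaves one substantive step unclosed. The paper never forms $s^{x-u}$ at all: it exploits the naturality of $x$ to write $e=\sum_{n}P_{x=ne}e$, uses Lemma~\ref{YY3-A}(iii) to replace $P_{x\geq u}$ by $P_{ne\geq u}$ on each band $B_{x=ne}$, bounds $P_{ne\geq u}s^{u}\leq P_{ne\geq u}s^{n}e$ there by single-variable functional calculus applied to $u$ alone, and sums to get $P_{x\geq u}s^{u}e\leq s^{x}$ before applying $T$. Your version instead hinges on the identity $s^{x}=s^{u}\,s^{x-u}$ for two \emph{distinct} elements $x$ and $u$. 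This is exactly the point you flag as ``delicate'' and then do not prove, and it is not delivered by the multiplicativity theorem in Section~2, which only gives $(fg)(x)=f(x)g(x)$ for two functions of the \emph{same} element. Establishing $s^{v+w}=s^{v}s^{w}$ in a Riesz space requires either a two-variable (joint) functional calculus or a representation argument; for a natural $x$ the cheapest verification is precisely to decompose over the bands $B_{x=ne}$ and reduce to $s^{ne-u}=s^{n}s^{-u}$ --- at which point you have reconstructed the paper's proof. So the gap is real but reparable, and closing it honestly costs you the economy your approach seemed to offer; what it buys in exchange is an argument that would survive for non-natural $x$ once $s^{x}$ is defined.

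Two smaller points. First, the remaining ingredients you invoke ($P_{y\geq 0}s^{y}\geq P_{y\geq 0}e$, $s^{u}\in R(T)$, the averaging property $T(fz)=fTz$ for $f\in R(T)$, and the commutation of band projections with multiplication in $E^{u}$) are all legitimate and are also implicitly used by the paper in its final step. Second, your chain lands on $TP_{x\geq u}e\leq s^{-u}g_{x}(s)$ for $s>1$; the theorem as printed says $s^{u}g_{x}(s)$, but Corollary~\ref{YY3-L} and the classical inequality both confirm that the exponent in the displayed statement is a typo and that your form is the intended one.
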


\begin{proof}
Let $s>1.$ According to Lemma \ref{YY3-A}.(iii) we have%
\[
P_{x\geq u}P_{x=ne}s^{u}=P_{ne\geq u}P_{x=ne}s^{u}e\leq P_{ne\geq u}%
P_{x=ne}s^{n}e.
\]
It follows that%
\begin{align*}
P_{x\geq u}s^{u}e  &  =\sum\limits_{n=1}^{\infty}P_{x\geq u}P_{x=ne}s^{u}%
e\leq\sum\limits_{n=0}^{\infty}P_{ne\geq u}P_{x=ne}s^{n}e\\
&  =P_{ne\geq u}\sum\limits_{n=0}^{\infty}P_{x=ne}s^{n}e=P_{ne\geq u}x^{s}.
\end{align*}
\newline Applying $T$ to both sides, we get the first inequality. The second
one can be proved in a similar way.
\end{proof}

\begin{corollary}
\label{YY3-L}Let $x$ be a natural vector in\ $E$ and $\alpha\in\mathbb{R}%
_{+}.$ Then the following hold.%
\[
TP_{x\geq\alpha e}e\leq\dfrac{g_{x}(s)}{s^{\alpha}},\ \ \forall s>1\
\]%
\[
TP_{x\leq\alpha e}e\leq\dfrac{g_{x}(s)}{s^{\alpha}},\ \ \forall0<s<1.
\]

\end{corollary}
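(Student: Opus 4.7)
The plan is to derive the corollary as a direct specialization of Theorem \ref{YY3-K} with $u = \alpha e$. Two preliminary points must be checked: first, that $\alpha e$ is an admissible choice for $u$ (i.e.\ $\alpha e \in R(T)$), and second, that the functional calculus identifies $s^{\alpha e}$ as the real scalar $s^{\alpha}$ times the unit $e$.

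For admissibility, the identity $Te = e$ places the weak order unit $e$ in $R(T)$, and since $R(T)$ is a Riesz subspace (in particular a linear subspace), it is closed under real scalar multiples, so $\alpha e \in R(T)$ for every $\alpha \in \mathbb{R}_{+}$. For the spectral identity, the right continuous spectral system of $\alpha e$ is $(e_{t}^{\alpha e})_{t \in \mathbb{R}} = (P_{\alpha e \leq te} e)_{t \in \mathbb{R}}$, which vanishes for $t < \alpha$ and equals $e$ for $t \geq \alpha$. Hence the Daniell integral $I_{\alpha e}$ applied to the continuous function $t \mapsto s^{t}$ reduces to a single jump contribution at $t = \alpha$ and yields $s^{\alpha e} = s^{\alpha} e$, and likewise $s^{-\alpha e} = s^{-\alpha} e$.

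With these in hand, I apply Theorem \ref{YY3-K} to $u = \alpha e$. Since multiplication by $s^{\pm \alpha} e$ acts as multiplication by the real scalar $s^{\pm \alpha}$ and factors out of $T$ by linearity, the two bounds of the theorem reduce to $TP_{x \geq \alpha e} e \leq g_{x}(s)/s^{\alpha}$ for $s > 1$ and $TP_{x \leq \alpha e} e \leq g_{x}(s)/s^{\alpha}$ for $0 < s < 1$, exactly as stated. There is no real obstacle: the whole proof is a formal substitution once the spectral identity $s^{\alpha e} = s^{\alpha} e$ has been observed.
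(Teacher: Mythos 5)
Your proof is correct and is exactly the route the paper intends: the corollary is stated without proof as the specialization $u=\alpha e$ of Theorem \ref{YY3-K}, and your two checks --- that $\alpha e\in R(T)$ because $Te=e$ and $R(T)$ is a linear subspace, and that the spectral system of $\alpha e$ gives $s^{\alpha e}=s^{\alpha}e$ --- are precisely the routine details being suppressed. The only caveat is that the first inequality of Theorem \ref{YY3-K} as printed reads $TP_{x\geq u}e\leq s^{u}g_{x}(s)$ rather than $s^{-u}g_{x}(s)$, so a literal substitution would give $s^{\alpha}g_{x}(s)$ instead of $g_{x}(s)/s^{\alpha}$; your specialization tacitly uses the corrected form, which is what the theorem's own proof (namely $s^{u}TP_{x\geq u}e\leq Ts^{x}=g_{x}(s)$) actually establishes.
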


We will show now that the main result in \cite{L-888} can be deduced from the
previous corollary.

\begin{theorem}
\cite[Theorem 5.3]{L-888}Let $\left(  E,e,T\right)  $ be Riesz conditional
triple. Let $\left(  P_{j}\right)  $ be a Benoulli process with $TP_{j}=f$ for
all $j\in\mathbb{N}$ for some $f\in E.$ Let $S_{n}=%
{\textstyle\sum\limits_{k=1}^{n}}
P_{k}.$ Then%
\[
TP_{\left(  S_{n}-te\right)  ^{+}}e\leq\left(  \dfrac{n\left\Vert f\right\Vert
_{u}\exp1}{t}\right)  ^{t}\exp\left(  -nf\right)  .
\]
holds for all $t>n\left\Vert f\right\Vert _{e},$ where%
\[
\left\Vert f\right\Vert _{e}:=\inf\left\{  \beta\in%
\mathbb{R}
:\left\vert f\right\vert \leq\beta e\right\}  .
\]

\end{theorem}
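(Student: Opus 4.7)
The plan is to combine the Chernoff estimate of Corollary~\ref{YY3-L} with an explicit computation of the generating function of $S_n$, then dominate it by an exponential and optimize over the real parameter $s$. Throughout, I identify each band projection $P_k$ with the associated Bernoulli vector $P_k e$, so that $S_n=\sum_{k=1}^{n}P_k e$ and $T(P_k e)=f$.

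First, since $P_{(S_n-te)^+}=P_{S_n>te}\le P_{S_n\ge te}$, Corollary~\ref{YY3-L} applied with $\alpha=t$ gives $TP_{(S_n-te)^+}e\le g_{S_n}(s)/s^t$ for every $s>1$. Each $P_k e$ is Bernoulli with parameter $f$, hence $g_{P_k e}(s)=sf+e-f$. Because the $P_k$ form a Bernoulli process the elements $P_k e$ are $T$-independent; combining Proposition~\ref{YY3-C}(iii), the identity $s^{x+y}=s^x s^y$ in the $f$-algebra $E^u$, and the multiplicative property $T(xy)=Tx\,Ty$ for independent $x,y$ (applied $n-1$ times), one obtains
\[
g_{S_n}(s)=T s^{S_n}=T\prod_{k=1}^{n}s^{P_k e}=\prod_{k=1}^{n}T s^{P_k e}=(sf+e-f)^n.
\]

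Next I would lift the elementary inequality $1+u\le e^u$ through functional calculus in the $f$-algebra generated by $f$ and $e$, obtaining $g_{S_n}(s)\le e^{n(s-1)f}=e^{nsf}\exp(-nf)$. Using $f\le \|f\|_u e$ and monotonicity of the exponential, $e^{nsf}\le e^{ns\|f\|_u}e$, so
\[
TP_{(S_n-te)^+}e\le \frac{e^{ns\|f\|_u}}{s^{t}}\exp(-nf),\qquad s>1.
\]
Finally, minimizing $s\mapsto ns\|f\|_u-t\log s$ over $s>1$ gives the critical point $s^\ast=t/(n\|f\|_u)$, which is admissible precisely under the hypothesis $t>n\|f\|_u$. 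Substituting yields the scalar factor $e^{t}/(t/(n\|f\|_u))^{t}=(n\|f\|_u\exp 1/t)^{t}$, matching the bound in the theorem.

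The main obstacle is the computation of $g_{S_n}(s)$: the excerpt never explicitly records that generating functions are multiplicative under independent sums. I would therefore first prove the auxiliary identity $g_{x+y}(s)=g_x(s)g_y(s)$ for $T$-independent natural elements. This rests on three ingredients: (i) the identity $s^{x+y}=s^x s^y$ coming from part~(i) of the functional-calculus theorem above; (ii) the preservation of $T$-independence under functional calculus, since $s^x$ lies in the closed Riesz subspace $\langle x,R(T)\rangle$; and (iii) the multiplicativity of $T$ on independent elements recorded in (\ref{Eq2}). Once this lemma is available, the rest of the argument is a mechanical combination of scalar inequalities lifted to $E^u$.
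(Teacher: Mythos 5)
Your proposal is correct and follows essentially the same route as the paper: compute $g_{S_n}(s)=(sf+e-f)^n$, dominate it by $\exp(nsf)\exp(-nf)\le \exp(ns\Vert f\Vert_e)\exp(-nf)$ via the lifted inequality $e+x\le\exp(x)$, and apply Corollary~\ref{YY3-L} at the optimal point $s=t/(n\Vert f\Vert_e)>1$. The only cosmetic difference is that you justify $g_{S_n}(s)=(sf+e-f)^n$ by the multiplicativity of generating functions under $T$-independence (which the paper does prove, in the following section), whereas the paper simply invokes $S_n\sim_T B(n,f)$.
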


It should be noted that in \cite{L-888} the authors assume that $e$ is a
strong unit for $E.$ This is the case if we work in the ideal generated by
$e.$

\begin{proof}
As $S_{n}\sim_{T}B(n,f)$ we have%
\[
g_{S_{n}}(s)=\left(  sf+e-f\right)  ^{n}=\left(  e+\left(  s-1\right)
f\right)  ^{n}.
\]
In the sequel we will use the inequality $\log\left(  e+x\right)  \leq x$ for
all $x\in E_{+}$ and the formula (see \cite[Theorem 3.9]{L-06})%
\[
\exp\left[  n\log x\right]  =x^{n}%
\]
true for any $x\geq\alpha e,$ where $\alpha\in\left(  0,\infty\right)  .$ Note
also that $e$ is a multiplicative unit in the $f$-algebra $E_{e}.$ The real
$s=\dfrac{t}{n\left\Vert f\right\Vert _{e}}$ satisfies $s>1$ and we have then%
\begin{align*}
g_{S_{n}}(s)  &  =\left(  e+\left(  s-1\right)  f\right)  ^{n}=\exp\left[
n\log\left(  e+\left(  s-1\right)  f\right)  \right] \\
&  \leq\exp\left[  n\left(  s-1\right)  f\right]  =\exp\left[  nsf\right]
\exp\left[  -nf\right] \\
&  \leq\exp\left(  ns\left\Vert f\right\Vert _{e}e\right)  \exp\left(
-nf\right)  =\exp\left(  t\right)  \exp\left(  -nf\right)  .
\end{align*}
Now using Corollary \ref{YY3-L} to derive that%
\begin{align*}
TP_{\left(  S_{n}-te\right)  ^{+}}e  &  \leq TP_{S_{n}\geq te}e\leq
\dfrac{g_{S_{n}}(s)}{s^{t}}\leq\dfrac{\exp t.\exp\left(  -nf\right)  }{s^{t}%
}\\
&  =\exp t.\left(  \frac{n\left\Vert f\right\Vert _{e}}{t}\right)  ^{t}%
\exp\left(  -nf\right)  ,
\end{align*}
which completes the proof.
\end{proof}

\begin{lemma}
Let $\left(  E,e,T\right)  $ be a conditional Riesz triple and $x$ a natural
element in $E.$ Then the following hold.

\begin{enumerate}
\item[(i)] $g_{x}$ is continuous on $\left[  0,1\right]  $ and infinitely
differentiable on $[0,1).$ Moreover we have the formula%
\begin{equation}
g_{x}^{(k)}(s)=%
{\textstyle\sum\limits_{n=k}^{\infty}}
n(n-1)...(n-k+1)s^{n-k}\pi_{x}(n) \label{F1}%
\end{equation}
for every integer $k\geq0$ and for all $s\in\lbrack0,1).$

\item[(ii)] For each integer $k$, we have%
\[
g_{x}^{(k)}(s)=T\left[  x(x-e)...(x-(k-1)e)s^{x-ke}\right]  .
\]
In particular, $g_{x}^{(k)}(0)=k!\pi_{x}(k).$
\end{enumerate}
\end{lemma}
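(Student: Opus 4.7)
The plan is to prove (i) by induction on $k$, justifying term-by-term differentiation with Theorem~\ref{DOM} (Dominated Convergence for Riesz-space-valued series), and to deduce (ii) by feeding formula (F1) into the band-projection identity of Lemma~\ref{YY3-B}. For (i), I first settle continuity on $[0,1]$: the $n$-th summand $s^n\pi_x(n)$ is dominated by $\pi_x(n)$, which is order-summable to $e$ since $x$ is natural, so if $s_m\to s$ in $[0,1]$ then Theorem~\ref{DOM} yields $g_x(s_m)\to g_x(s)$. For differentiability at a point $s_0\in[0,1)$ I pick $s_1\in(s_0,1)$ and restrict the variable $t$ to a neighborhood of $s_0$ inside $[0,s_1]$. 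The elementary identity
$$\frac{t^{n-k}-s_0^{n-k}}{t-s_0} = t^{n-k-1}+t^{n-k-2}s_0+\cdots+s_0^{n-k-1}$$
bounds the difference quotient of the $n$-th term of $g_x^{(k)}$ by $n(n-1)\cdots(n-k)\,s_1^{n-k-1}\pi_x(n)$. The scalar factor is uniformly bounded in $n$ because the exponential decay $s_1^{n-k-1}$ beats the polynomial growth, so the $n$-th term is dominated by a constant multiple of $\pi_x(n)$, a sequence whose sum lies in $E$. Theorem~\ref{DOM} then passes the limit inside the sum and delivers $g_x^{(k+1)}(s_0)$ with the claimed formula, closing the induction.

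For (ii), I apply Lemma~\ref{YY3-B}(i) with the continuous function $\varphi(t)=t(t-1)\cdots(t-k+1)\,s^{t-k}$ (with $s>0$) and use that $P_{x=ne}$ is an $f$-algebra multiplier to obtain
$$P_{x=ne}\bigl[x(x-e)\cdots(x-(k-1)e)\,s^{x-ke}\bigr]=n(n-1)\cdots(n-k+1)\,s^{n-k}P_{x=ne}e,$$
which vanishes automatically for $n<k$. Since $x$ is natural and $T$ is strictly positive, $\sum_{n\ge 0}P_{x=ne}e=e$; summing the previous identity over $n\ge k$ and then applying $T$ with Theorem~\ref{MON} yields
$$T\bigl[x(x-e)\cdots(x-(k-1)e)\,s^{x-ke}\bigr]=\sum_{n\ge k}n(n-1)\cdots(n-k+1)\,s^{n-k}\pi_x(n),$$
which is exactly $g_x^{(k)}(s)$ by (F1). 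The case $s=0$ is then read off the same formula with the convention $0^0=1$, leaving only the $n=k$ term and giving $g_x^{(k)}(0)=k!\,\pi_x(k)$.

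The principal obstacle is securing the uniform scalar bound needed to invoke Theorem~\ref{DOM} at each step of the induction; this relies essentially on the strict inequality $s_1<1$, which is precisely why differentiability cannot be extended to $s=1$ by this argument. A secondary subtlety in (ii) is that $s^{x-ke}$ a priori lives in $E^s$ rather than $E$ when $x-ke$ has a negative part on some band, but the band decomposition via $P_{x=ne}$ reduces every computation to scalars on disjoint bands, where the domain and convergence issues disappear.
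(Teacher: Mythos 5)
Your proof is correct and follows essentially the same route as the paper: Theorem~\ref{DOM} applied to the difference quotients (with a geometric-decay domination valid only for $s<1$) for part (i), and the functional-calculus/band-decomposition identity $P_{x=ne}h(x)=h(n)P_{x=ne}e$ summed over $n$ for part (ii). Your write-up is in fact somewhat more explicit than the paper's about the $n<k$ terms and the $s^{x-ke}\in E^{s}$ subtlety, which the paper absorbs into the cutoff $h_{k}(u)=u(u-1)\cdots(u-k+1)\mathbf{1}_{[k,\infty)}(u)$.
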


It follows then from the above lemma that

\begin{center}%
\[
g_{x}(s)=%
{\textstyle\sum\limits_{n=0}^{\infty}}
s^{n}\dfrac{g_{x}^{(k)}(0)}{k!},\qquad s\in\lbrack0,1).
\]

\end{center}

\begin{proof}
(i) As $\left\vert s^{k}\pi_{x}\left(  k\right)  \right\vert \leq\pi
_{x}\left(  k\right)  $ and $\sum\pi_{x}\left(  k\right)  $ order converges,
it follows from Theorem \ref{DOM} that%
\[
\lim\limits_{s\longrightarrow s_{0}}\sum\limits_{n=0}^{\infty}s^{n}\pi
_{x}\left(  k\right)  =\sum\limits_{n=0}^{\infty}\lim\limits_{s\longrightarrow
s_{0}}s^{n}\pi_{x}\left(  k\right)  =\sum\limits_{n=0}^{\infty}s_{0}^{n}%
\pi_{x}\left(  k\right)  ,
\]
which proves that $g_{x}$ is continuous on $\left[  0,1\right]  .$ Let us now
fix a real $s_{0}\in\lbrack0,1)$ and a compact neighborhood $K$ of $s_{0}$ in
$[0,1).$ We have for $k\in\mathbb{N}$ and $s\in K,$%
\[
\left\vert \dfrac{s^{k}\pi_{x}\left(  k\right)  -s_{0}^{k}\pi_{x}\left(
k\right)  }{s-s_{0}}\right\vert \leq kM^{k-1}\pi_{x}\left(  k\right)  \leq
kM^{k-1}e,
\]
where $M=\sup\limits_{t\in K}\left\vert t\right\vert <1.$ Using Theorem
\ref{DOM} again we derive that $g_{x}$ is differentiable on $[0,1)$ and that
formula \ref{F1} holds for $k=1$. We can then prove by induction on $k$ that
$g_{x}$ is $k$-differentiable and the formula \ref{F1} is valid.

(ii) Let \ $0\leq s<1.$\ We have%
\begin{align*}
g_{x}^{(k)}(s)  &  =%
{\textstyle\sum\limits_{n=k}^{\infty}}
n(n-1)...(n-k+1)s^{n-k}\pi_{x}(n)\\
&  =Th_{k}(x)=T\left[  x(x-e)...(x-(k-1)e)s^{x-ke}\right]  ,
\end{align*}
where $h_{k}(u)=u(u-1)...(u-k+1)\mathbf{1}_{\left[  k,\infty\right)  }(u).$
\end{proof}

The next lemma generalizes a well-known fact in Probability theory and will be
needed in the proof of Proposition \ref{YY3-F} below.

\begin{lemma}
If $x$ is a natural element in $E$ then
\[
Tx=%
{\textstyle\sum\limits_{n=1}^{\infty}}
TP(x\geq ne)e.
\]

\end{lemma}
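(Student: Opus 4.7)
The plan is to reduce the claim to a double-sum identity that is handled by the Monotone Convergence Theorem (Theorem \ref{MON}) together with the Fubini-type result stated in Section 3.

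First I would establish the decomposition
\[
x = \sum_{n=0}^{\infty} n\, P_{x=ne}\,e.
\]
Because $x$ is natural, the assumption $\sum_{n\ge 0}\pi_x(n)=e$ together with strict positivity of $T$ forces $\sum_{n\ge 0} P_{x=ne}e = e$ (this is already observed in the remark following the definition of naturality, and the $P_{x=ne}$ are pairwise disjoint by Lemma \ref{YY3-A}(ii) applied with $ne$ and $me$ in place of $y,w$). Applying Lemma \ref{YY3-B}(i) with $y=ne$ gives $P_{x=ne}\,x = n P_{x=ne}\,e$, and the decomposition above is then obtained by summing these identities. Applying $T$ and invoking order-continuity (or Theorem \ref{MON}) yields
\[
Tx = \sum_{n=1}^{\infty} n\, \pi_x(n).
\]

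Next I would express the tail projection $P_{x\ge ne}e$ in terms of the mass function. Using Lemma \ref{YY3-A}(iii),
\[
P_{x\ge ne}\,P_{x=me}e \;=\; P_{me\ge ne}\,P_{x=me}e,
\]
which equals $P_{x=me}e$ when $m\ge n$ and vanishes otherwise. Multiplying the resolution of identity $\sum_{m\ge 0} P_{x=me}e = e$ by $P_{x\ge ne}$ therefore gives
\[
P_{x\ge ne}\,e \;=\; \sum_{m\ge n} P_{x=me}\,e,
\]
and applying $T$ produces $TP_{x\ge ne}e = \sum_{m\ge n} \pi_x(m)$.

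Finally I would combine the two identities and swap the order of summation by the Fubini theorem of Section 3 (all terms are positive in $E^{s}$):
\[
\sum_{n=1}^{\infty} TP_{x\ge ne}\,e \;=\; \sum_{n=1}^{\infty} \sum_{m=n}^{\infty} \pi_x(m) \;=\; \sum_{m=1}^{\infty} \sum_{n=1}^{m} \pi_x(m) \;=\; \sum_{m=1}^{\infty} m\,\pi_x(m) \;=\; Tx.
\]

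The main obstacle is not any single computation but rather making sure the decomposition $x=\sum n P_{x=ne}e$ is legitimate in $E^{s}$: one must verify that the band projections $P_{x=ne}$ indeed form a complete orthogonal system through strict positivity of $T$, and then apply Theorem \ref{MON} to move $T$ inside the series. Once that is in place the rest is a direct rearrangement via Fubini.
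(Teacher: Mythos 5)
Your proof is correct and follows essentially the same route as the paper: the core step in both is writing $Tx=\sum_{n\geq 1}n\,\pi_x(n)=\sum_{n\geq 1}\sum_{k=1}^{n}\pi_x(n)$ and interchanging the order of summation by the Fubini theorem of Section 3, then identifying the inner tail sum with $TP_{x\geq ke}e$. The only difference is that you spell out the two preliminary identities ($Tx=\sum n\pi_x(n)$ via the decomposition $x=\sum nP_{x=ne}e$, and $TP_{x\geq ne}e=\sum_{m\geq n}\pi_x(m)$ via Lemma \ref{YY3-A}(iii)) which the paper uses without comment.
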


\begin{proof}
We have%
\[
Tx=%
{\textstyle\sum\limits_{n=1}^{\infty}}
n\pi_{x}(n)=%
{\textstyle\sum\limits_{n=1}^{\infty}}
{\textstyle\sum\limits_{k=1}^{n}}
\pi_{x}(n)=%
{\textstyle\sum\limits_{k=1}^{\infty}}
{\textstyle\sum\limits_{n=k}^{\infty}}
\pi_{x}(n)=%
{\textstyle\sum\limits_{k=1}^{\infty}}
TP(x\geq ke)e,
\]
which proves the lemma.
\end{proof}

\begin{proposition}
\label{YY3-F}Let $\left(  E,e,T\right)  $ be a conditional Riesz triple and
let $x$ be a natural element in\ $E.$ For each $n\in%
\mathbb{N}
$ \ the following are equivalent:

\begin{enumerate}
\item[(i)] $Tx^{n}\in E^{u}$ (i.e. $x\in L^{n}(T)$);

\item[(ii)] $g_{x}^{(n)}(1)$ exists in $E^{u}$ (in the sense that the left
limit $\lim_{s\uparrow1}g_{x}^{(n)}(s)$ exists in $E^{u}$).\newline In either
case, we have%
\begin{align*}
g_{x}^{(n)}(1)  &  =T\left[  x(x-e)...(x-(n-1)e)\right] \\
&  =%
{\textstyle\sum\limits_{k=n}^{\infty}}
k(k-1)...(k-n+1)\pi_{x}(k).
\end{align*}

\end{enumerate}
\end{proposition}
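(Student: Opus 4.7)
The plan is to push the series formula (\ref{F1}) for $g_x^{(n)}(s)$ to its endpoint $s = 1$ via the Monotone Convergence Theorem (Theorem \ref{MON}), to identify the resulting sum with $T[x(x-e)\cdots(x-(n-1)e)]$, and then to compare this with $Tx^n$ modulo lower-order polynomial terms.

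First, in formula (\ref{F1}) each summand $k(k-1)\cdots(k-n+1)\,s^{k-n}\pi_x(k)$ is nonnegative and increases with $s \in [0,1)$. Applying Theorem \ref{MON} to the parameter $s \uparrow 1$ yields that the left limit always exists in $E^s$ and
$$
\lim_{s \uparrow 1} g_x^{(n)}(s) \;=\; \sum_{k=n}^{\infty} k(k-1)\cdots(k-n+1)\,\pi_x(k).
$$
Setting $h_n(x) := x(x-e)\cdots(x-(n-1)e)$ and using the natural structure of $x$, we have $h_n(x) = \sum_{k \geq n} k(k-1)\cdots(k-n+1)\,P_{x=ke}e$ (the polynomial factor vanishes on the bands with $k < n$); applying $T$ term-by-term via order continuity identifies the above sum with $T h_n(x)$. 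Hence $g_x^{(n)}(1^-) \in E^u$ iff $T h_n(x) \in E^u$.

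It remains to show $Tx^n \in E^u \iff T h_n(x) \in E^u$. Band-wise, $h_n(k) \leq k^n$ on $B_{x=ke}$, so $h_n(x) \leq x^n$ and $T h_n(x) \leq Tx^n$, which handles one direction. For the converse I would use the Stirling expansion $u^n = \sum_{j=0}^n S(n,j)\,u(u-1)\cdots(u-j+1)$ with $S(n,j) \geq 0$, giving $x^n = \sum_{j=0}^n S(n,j)\,h_j(x)$; it then suffices to control each $h_j(x)$ for $j \leq n$. A band-by-band check supplies the estimate $h_j(x) \leq h_n(x) + C_j\,e$: on $B_{x=ke}$ with $k \geq n$, $h_n(k)/h_j(k) = (k-j)(k-j-1)\cdots(k-n+1) \geq 1$, while on the finitely many bands with $k < n$ the value $h_j(k)$ is bounded by a constant $C_j$. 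Since $Te = e \in E^u$, $T h_n(x) \in E^u$ propagates to every $T h_j(x) \in E^u$ and therefore to $Tx^n \in E^u$. The displayed equalities for $g_x^{(n)}(1)$ are then read off directly from the two identifications above.

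I expect the main technical point to be the band-wise inequality $h_j(x) \leq h_n(x) + C_j e$: the Pochhammer factor collapses to zero on small bands while growing like $k^n$ on the large ones, and one has to absorb the finite exceptional part into a constant multiple of $e$. Everything else is a direct application of Theorem \ref{MON} and order continuity of $T$.
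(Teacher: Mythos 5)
Your argument is correct, and it takes a genuinely different route from the paper. The paper proves the equivalence by induction on $n$: the base case computes the difference quotient $\frac{g_{x}(s)-g_{x}(1)}{s-1}=\sum_{k}(1+s+\dots+s^{k-1})\pi_{x}(k)$ and passes to the limit by Theorem \ref{MON}, and the inductive step treats $\frac{g_{x}^{(n)}(s)-g_{x}^{(n)}(1)}{s-1}$ the same way, invoking a Lyapunov-type inequality ($x\in L^{q}(T)\Rightarrow x\in L^{p}(T)$ for $p\leq q$, cited without proof) to convert between $Tx^{n+1}\in E^{u}$ and $T[x(x-e)\cdots(x-ne)]\in E^{u}$. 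You instead work non-inductively: you apply Theorem \ref{MON} directly to formula (\ref{F1}) as $s\uparrow1$ (legitimate, since each summand $k(k-1)\cdots(k-n+1)s^{k-n}\pi_{x}(k)$ increases with $s$), so the left limit always exists in $E^{s}$ and equals $Th_{n}(x)$, and then you replace the Lyapunov step by an explicit combinatorial comparison: $h_{n}(x)\leq x^{n}$ for one direction, and the Stirling expansion $x^{n}=\sum_{j}S(n,j)h_{j}(x)$ together with the band-wise bound $h_{j}(x)\leq h_{n}(x)+C_{j}e$ for the other. Your key inequality is sound: for $k\geq n$ the ratio $h_{n}(k)/h_{j}(k)=(k-j)\cdots(k-n+1)\geq1$, and the finitely many bands with $k<n$ contribute at most a constant multiple of $e$. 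What your approach buys is self-containedness (no appeal to Lyapunov's inequality in Riesz spaces) and a cleaner interpretation of condition (ii) as literally stated (the left limit of $g_{x}^{(n)}$, rather than the $(n+1)$-st difference quotient); what the paper's induction buys is that it simultaneously establishes existence of all lower-order factorial moments along the way. Both yield the same displayed identities for $g_{x}^{(n)}(1)$.
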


The quantities \ $Tx^{n},$ $n\geq1,$ are called \ $T$-moments of $x,$ and the
quantities $Tx,Tx(x-e),Tx(x-e)(x-2e),...$are called $T$-factorials moments of
$\ x.$

\begin{proof}
Observe that
\begin{align*}
\dfrac{g_{x}(s)-g_{x}(1)}{s-1}  &  =\dfrac{e-g_{x}(s)}{1-s}=%
{\textstyle\sum\limits_{n=1}^{\infty}}
\frac{1-s^{n}}{1-s}\pi_{x}(n)\\
&  =%
{\textstyle\sum\limits_{n=1}^{\infty}}
\left(  1+s+...+s^{n-1}\right)  \pi_{x}(n).
\end{align*}
So according to Monotone Convergence Theorem (Theorem \ref{MON}) we have%
\begin{align*}
\lim\limits_{s\uparrow1}\dfrac{g_{x}(s)-g_{x}(1)}{s-1}  &  =%
{\textstyle\sum\limits_{n=1}^{\infty}}
\lim\limits_{s\uparrow1}\left(  1+s+...+s^{n-1}\right)  \pi_{x}(n)\\
&  =%
{\textstyle\sum\limits_{n=1}^{\infty}}
n\pi_{x}(n)=Tx.
\end{align*}
This proves the equivalence for $n=1.$

Assume now that the result is true for rank $n.$ Let us prove it for rank
$n+1.$ Assume first that $g^{\left(  n+1\right)  }\left(  1\right)  $ exists.
Then%
\begin{align*}
g^{\left(  n+1\right)  }\left(  1\right)   &  =\lim\limits_{s\uparrow1}%
\dfrac{g_{x}^{\left(  n\right)  }(s)-g_{x}^{\left(  n\right)  }(1)}{s-1}%
=\lim\limits_{s\uparrow1}%
{\textstyle\sum\limits_{k=n}^{\infty}}
k\left(  k-1\right)  ...\left(  k-n+1\right)  \dfrac{\left(  1-s^{n-k}\right)
}{1-s}\pi_{x}(k)\\
&  =\left(  \ast\right)
{\textstyle\sum\limits_{k=n}^{\infty}}
\lim\limits_{s\uparrow1}k\left(  k-1\right)  ...\left(  k-n+1\right)  \left(
k-n\right)  \pi_{x}(k)=T\left[  x\left(  x-e\right)  ...\left(  x-ne\right)
\right]  ,
\end{align*}
where the equality $\left(  \ast\right)  $ follows from Theorem \ref{MON}. Now
it follows from Lyapunov inequality that if $x\in L^{q}\left(  T\right)  $
then $x\in L^{p}\left(  T\right)  $ for all $p\leq q.$ So as $x\in
L^{n}\left(  T\right)  $ we have $Tx^{n+1}\in E^{u}$ if and only if $T\left[
x\left(  x-e\right)  ...\left(  x-ne\right)  \right]  \in E^{u}.$This proves
the result.

Assume now that $Tx^{n+1}\in E^{u}$ then by the above remark, $Tx^{n}\in
E^{u}$ and then by the induction hypothesis, $g^{\left(  n\right)  }\left(
1\right)  $ exists. Also $T\left[  x\left(  x-e\right)  ...\left(
x-ne\right)  \right]  \in E^{u},$ and then the use of Theorem \ref{MON} gives
again that%
\[
g^{\left(  n+1\right)  }\left(  1\right)  =T\left[  x\left(  x-e\right)
...\left(  x-ne\right)  \right]  \in E^{u}.
\]
The proof is now complete.
\end{proof}

\begin{corollary}
Let $\left(  E,e,T\right)  $ be a conditional Riesz triple and $x$ a natural
element in\ $E.$ Then $Tx=g_{x}^{\prime}(1)$ and $Tx^{2}=g_{x}^{\prime
}(1)+g_{x}^{\prime\prime}(1)$ if $Tx^{2}\in E^{u}.$
\end{corollary}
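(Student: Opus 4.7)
The plan is to reduce the corollary to two applications of Proposition \ref{YY3-F}, combined with elementary algebraic manipulation using the linearity of $T$.

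First I would take $n=1$ in Proposition \ref{YY3-F}. The product $x(x-e)\cdots(x-(n-1)e)$ reduces to just $x$ (an empty product times $x$), so the proposition directly yields $g_{x}^{\prime}(1)=Tx$ as soon as either side is defined in $E^{u}$. That settles the first identity.

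Next I would take $n=2$, giving $g_{x}^{\prime\prime}(1)=T\bigl[x(x-e)\bigr]$ under the hypothesis $Tx^{2}\in E^{u}$. Since $R(T)$ is an $f$-subalgebra of $E^{u}$, I can expand $x(x-e)=x^{2}-x$ inside $T$; but I must first justify that both $Tx^{2}$ and $Tx$ make sense so that the linearity split is valid. This is where Lyapunov's inequality (invoked in the proof of Proposition \ref{YY3-F}) enters: from $x\in L^{2}(T)$ I get $x\in L^{1}(T)$, so $Tx\in E^{u}$ as well and the splitting $T[x(x-e)]=Tx^{2}-Tx$ is legitimate.

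Combining the two steps gives
\[
g_{x}^{\prime\prime}(1)=Tx^{2}-Tx=Tx^{2}-g_{x}^{\prime}(1),
\]
which rearranges to the claimed $Tx^{2}=g_{x}^{\prime}(1)+g_{x}^{\prime\prime}(1)$. There is no real obstacle here; the only point requiring a brief word is the use of Lyapunov to guarantee that $Tx$ lies in $E^{u}$ so that the algebraic rearrangement inside $E^{u}$ is meaningful.
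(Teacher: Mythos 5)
Your proposal is correct and matches the intended derivation: the paper states this as an immediate corollary of Proposition \ref{YY3-F} (cases $n=1$ and $n=2$) without writing out a proof, and your argument --- including the Lyapunov step ensuring $Tx\in E^{u}$ so that $T[x(x-e)]=Tx^{2}-Tx$ is a legitimate split --- is exactly the reasoning being left implicit. Nothing is missing.
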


\section{Generating function and T-independence}

\begin{theorem}
Let $\left(  x_{k}\right)  _{1\leq k\leq n}$ be a sequence of $T$-independent
natural elements in\ $E,$ and put $S_{n}:=%
{\textstyle\sum\limits_{i=1}^{n}}
x_{i},$ then the $T$-generating function of $S_{n}$ is given by
\[
g_{S_{n}}(s)=Ts^{S_{n}}=g_{x_{1}}(s)...g_{x_{n}}(s).
\]

\end{theorem}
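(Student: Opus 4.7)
The plan is to reduce the claim to three ingredients already in the paper: the identity $g_x(s)=Ts^x$ from Proposition \ref{YY3-C}(iii), the multiplicativity $s^{a+b}=s^{a}s^{b}$ of the exponential in the $f$-algebra $E^u$, and the multiplicativity $T(y_1\cdots y_n)=Ty_1\cdots Ty_n$ for $T$-independent positive elements that was recalled after the definition of independence. The main outcome will be $g_{S_n}(s)=Ts^{x_1}\cdots Ts^{x_n}=g_{x_1}(s)\cdots g_{x_n}(s)$.

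First I would check that $S_n$ is itself a natural element so that $g_{S_n}$ is defined and Proposition \ref{YY3-C}(iii) applies. Since each $x_i$ is natural, the projections $(P_{x_i=ke}e)_{k\ge 0}$ form a partition of $e$; taking the product over $i$ shows that the family $\{P_{x_1=k_1 e}\cdots P_{x_n=k_n e}e\}_{(k_1,\ldots,k_n)\in\mathbb N^n}$ is a disjoint partition of $e$, and on each piece $S_n=(k_1+\cdots+k_n)e$ by Lemma \ref{YY3-B}(i). Grouping pieces by $m=k_1+\cdots+k_n$ gives $\sum_{m\ge 0}P_{S_n=me}e=e$, so $S_n$ is natural.

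Next I would apply Proposition \ref{YY3-C}(iii) at both ends: $g_{S_n}(s)=Ts^{S_n}$ and $g_{x_i}(s)=Ts^{x_i}$. Using that $E^u$ is an $f$-algebra and the continuous functional calculus developed via the Daniell integral, the identity $s^{a+b}=s^{a}\cdot s^{b}$ holds in $E^u_+$ for $s\ge 0$ (verify it first for step elements using the spectral system, then extend by Freudenthal approximation and order continuity). Iterating yields
\[
s^{S_n}=s^{x_1}\,s^{x_2}\cdots s^{x_n}\qquad\text{in }E^u_+.
\]
Because each $s^{x_i}$ lies in the closed Riesz subspace $\langle x_i,R(T)\rangle$, the $T$-independence of $(x_i)$ transfers to $T$-independence of $(s^{x_i})$. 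Applying the multiplicativity formula $T(y_1\cdots y_n)=Ty_1\cdots Ty_n$ for positive $T$-independent elements of $E^u$ (the extension of (\ref{Eq2}) recalled in Section 2) gives
\[
g_{S_n}(s)=Ts^{S_n}=T\!\left(s^{x_1}\cdots s^{x_n}\right)=Ts^{x_1}\cdots Ts^{x_n}=g_{x_1}(s)\cdots g_{x_n}(s).
\]

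The main technical obstacle is the multiplicativity $s^{a+b}=s^a s^b$ and the transfer of $T$-independence from $(x_i)$ to $(s^{x_i})$; both are routine from the $f$-algebra and functional calculus setup of Section 2 but deserve a short justification. The argument for $n=2$ extends to general $n$ by an immediate induction, since $\langle x_1+\cdots+x_{n-1},R(T)\rangle$ is $T$-independent of $\langle x_n,R(T)\rangle$.
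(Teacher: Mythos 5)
Your proof is correct, but it takes a genuinely different route from the paper's. The paper reduces to $n=2$ and computes directly with the mass functions: it decomposes $P_{x+y=ne}e=\sum_{k=0}^{n}P_{y=ke}P_{x+y=ne}e=\sum_{k}P_{y=ke}P_{x=(n-k)e}e$, applies $T$-independence at the level of band projections to factor $TP_{y=ke}P_{x=(n-k)e}e=TP_{y=ke}e\cdot TP_{x=(n-k)e}e$, and then recognizes the Cauchy product of the series $\sum_i s^{i}TP_{x=ie}e$ and $\sum_j s^{j}TP_{y=je}e$. You instead lift everything through the identity $g_{x}(s)=Ts^{x}$ of Proposition \ref{YY3-C}(iii), factor $s^{S_n}=s^{x_1}\cdots s^{x_n}$ in the $f$-algebra $E^{u}$, and invoke the extension of $T(xy)=Tx\,Ty$ to independent positive elements of $E^{u}_{+}$ recalled in Section 2. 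Your route is the direct analogue of the classical $\mathbb{E}[s^{X+Y}]=\mathbb{E}[s^{X}]\,\mathbb{E}[s^{Y}]$ argument and is shorter once that machinery is granted; the paper's route is more elementary in that it needs only independence of band projections and Fubini for double series, never the product $s^{x}s^{y}$ nor the transfer of independence through the functional calculus. Three small remarks. First, your verification that $S_n$ is natural is a worthwhile addition that the paper leaves implicit, since it is needed even to write $g_{S_n}$ as a series over $TP_{S_n=ne}e$. Second, $s^{x+y}=s^{x}s^{y}$ is a two-variable identity not literally covered by the single-variable product and composition theorems of Section 2; rather than appealing to Freudenthal approximation, the cleanest justification here is the band decomposition you already built, since on $B_{x_1=k_1e}\cap\cdots\cap B_{x_n=k_ne}$ both sides reduce to $s^{k_1+\cdots+k_n}$ times the corresponding component of $e$. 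Third, both your induction step and the paper's reduction to $n=2$ tacitly use that $x_1+\cdots+x_{n-1}$ remains $T$-independent of $x_n$ (i.e., that the join of independent closed Riesz subspaces is independent of the remaining one); this is standard but is assumed without comment in both arguments.
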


\begin{proof}
It is enough to prove the result for the case $n=2.$ Let $x$ and $y$ be two
natural elements in $E$ that are $T$-independent. We have
\begin{align*}
g_{x+y}(s)  &  =%
{\textstyle\sum\limits_{n\geq0}}
s^{n}TP_{x+y=ne}e=%
{\textstyle\sum\limits_{n\geq0}}
s^{n}%
{\textstyle\sum_{k=0}^{n}}
TP_{y=ke}P_{x+y=ne}e\\
&  =%
{\textstyle\sum\limits_{n\geq0}}
s^{n}%
{\textstyle\sum\limits_{k=0}^{n}}
TP_{y=ke}P_{x=\left(  n-k\right)  e}e=%
{\textstyle\sum\limits_{n\geq0}}
s^{n}%
{\textstyle\sum\limits_{i+j=n}}
TP_{y=je}e.TP_{x=ie}e\\
&  =\left(
{\textstyle\sum\limits_{i\geq0}}
s^{i}TP_{x=ie}e\right)  .\left(
{\textstyle\sum\limits_{j\geq0}}
s^{j}TP_{y=je}e\right)  =g_{x}(s).g_{y}(s).
\end{align*}
\newline This proves that $g_{x+y}\left(  s\right)  =g_{x}\left(  s\right)
g_{y}\left(  s\right)  $ as required.
\end{proof}

Let $N,x_{0},x_{1},x_{2},...$ be natural elements in $E$ with $x_{0}=0.$ Put
$x_{N}:=%
{\textstyle\sum\limits_{n\geq1}}
P_{N=ne}x_{n},$ $S_{n}:=%
{\textstyle\sum\limits_{k=0}^{n}}
x_{k}$ and $S_{N}:=%
{\textstyle\sum\limits_{n\geq1}}
P_{N=ne}S_{n}.$ It follows from these definitions that $P_{N=ne}x_{N}%
=P_{N=ne}x_{n}$ and $P_{N=ne}S_{N}=P_{N=ne}S_{n}.$

\begin{lemma}
\label{YY3-G}For any $f\in L_{2}(x)$ the following statements hold.

\begin{enumerate}
\item[(i)] We have for every $n,k\in\mathbb{N}$%
\[
P_{N=ne}f(x_{N})=P_{N=ne}f(x_{n}).
\]
In particular we have%
\[
P_{N=ne}P_{x_{N}=ke}e=P_{N=ne}P_{x_{n}=ke}e.
\]

\item[(ii)] We have for every $k,n$ in $\mathbb{N},$%
\[
P_{N=ne}f(S_{N})=P_{N=ne}f(S_{n})\ \text{and}\ P_{N=ne}P_{S_{N}=ke}%
e=P_{N=ne}P_{S_{n}=ke}e.
\]

\end{enumerate}
\end{lemma}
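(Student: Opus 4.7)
The strategy is to reduce both parts to Lemma \ref{YY3-B}(i) by establishing the band domination $P_{N=ne}\leq P_{x_N=x_n}$ in (i), and likewise $P_{N=ne}\leq P_{S_N=S_n}$ in (ii). Once this is in place, a single application of Lemma \ref{YY3-B}(i) followed by multiplication by $P_{N=ne}$ yields the stated identities; the particular cases about projections on points $\{ke\}$ are then obtained by specialising $f$ to a suitable indicator function.

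For part (i), I would start from the identity $P_{N=ne}x_N=P_{N=ne}x_n$ recorded immediately before the statement (it is immediate from $x_N=\sum_{m\ge 1}P_{N=me}x_m$ and the pairwise disjointness of the band projections $P_{N=me}$). Rewriting it as $P_{N=ne}(x_N-x_n)=0$ and using that band projections commute with the modulus gives $P_{N=ne}|x_N-x_n|=0$, so $B_{N=ne}$ and $B_{|x_N-x_n|}$ are disjoint bands. Hence $P_{N=ne}\le P_{|x_N-x_n|}^{d}=P_{x_N=x_n}$, which amounts to the absorption relation $P_{N=ne}=P_{N=ne}P_{x_N=x_n}$. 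Lemma \ref{YY3-B}(i), applied to the pair $(x_N,x_n)$, gives
\[
P_{x_N=x_n}\,f(x_N)=P_{x_N=x_n}\,f(x_n)\qquad(f\in L_2(I_{x_N})),
\]
and pre-multiplying by $P_{N=ne}$ yields the first equality of (i). For the specialisation, since each $x_n$ is natural and the support of $x_N$ on each band $B_{N=me}$ coincides with that of $x_m$, the element $x_N$ is natural as well; the Daniell functional calculus then gives $P_{x_N=ke}e=\mathbf{1}_{(k-1,k]}(x_N)$ and similarly for $x_n$, so the band-projection identity follows by taking $f=\mathbf{1}_{(k-1,k]}$ in what we have just proved.

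Part (ii) is formally identical: from the definition $S_N:=\sum_{m\ge 1}P_{N=me}S_m$ one reads off $P_{N=ne}S_N=P_{N=ne}S_n$, hence $P_{N=ne}\le P_{S_N=S_n}$ by the same modulus argument, and Lemma \ref{YY3-B}(i) applied to $(S_N,S_n)$ finishes the proof, with the indicator specialisation $f=\mathbf{1}_{(k-1,k]}$ delivering the projection identity. The only delicate step is the band inclusion $P_{N=ne}\le P_{x_N=x_n}$; once one has noted that band projections commute with $|\cdot|$, everything reduces to Lemma \ref{YY3-B}(i), so there is no real obstacle beyond bookkeeping.
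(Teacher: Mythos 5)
Your proof is correct, but it takes a genuinely different route from the paper's. The paper argues directly: it reduces to $f=\mathbf{1}_{(-\infty,a]}$, writes $f(x_N)=P_{(x_N-ae)^+}^{d}e$, and uses the Riesz-homomorphism property of $P_{N=ne}$ to compute $P_{N=ne}(x_N-ae)^+=P_{N=ne}(x_n-ae)^+$, then (implicitly) extends to general $f\in L_2$ by the usual Daniell machinery; the particular case is obtained with $f=\mathbf{1}_{\{k\}}$. You instead establish once and for all the band domination $P_{N=ne}\leq P_{x_N=x_n}$ (via $P_{N=ne}|x_N-x_n|=|P_{N=ne}(x_N-x_n)|=0$) and then invoke Lemma \ref{YY3-B}(i) for the pair $(x_N,x_n)$. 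This is arguably more economical: the extension from indicators to all of $L_2$ is done exactly once, inside Lemma \ref{YY3-B}, and your argument inherits it for free rather than redoing the computation for each $f$. The same comparison applies verbatim to part (ii).

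One caveat on your specialisation step. You take $f=\mathbf{1}_{(k-1,k]}$ and justify $P_{x_N=ke}e=\mathbf{1}_{(k-1,k]}(x_N)$ by first arguing that $x_N$ is natural. That identification is only valid when the spectral system of $x_N$ carries no mass in $(k-1,k)$, so you are leaning on the naturality of $x_N$ --- which in the paper is \emph{deduced from} this lemma (Lemma \ref{YY3-I}(i)), so you should either prove it independently (your one-line sketch is closer to an assertion than a proof) or avoid it. The clean fix is the paper's choice: take $f=\mathbf{1}_{\{k\}}$, for which $\mathbf{1}_{\{k\}}(x)=P_{x\leq ke}e-P_{x<ke}e=P_{x=ke}e$ holds for an arbitrary element $x$, with no naturality needed. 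With that substitution your argument is complete.
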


\begin{proof}
(i) It suffices to show these equalities for $f=\mathbf{1}_{\left(
-\infty,a\right]  },$ $a\in%
\mathbb{R}
.$ By definition of $x_{N}$ it follows that $P_{N=ne}x_{N}=P_{N=ne}x_{n}.$
Using this and noting that $f(x)=P_{x\leq ae}e,$ we get%
\begin{align*}
P_{N=ne}f(x_{N})  &  =P_{N=ne}P_{\left(  x_{N}-ae\right)  ^{+}}^{d}%
e=P_{N=ne}\left(  I-P_{\left(  x_{N}-ae\right)  ^{+}}\right)  e\\
&  =P_{N=ne}-P_{P_{N=ne}\left(  x_{N}-ae\right)  ^{+}}e.
\end{align*}
\newline Now as%
\begin{align*}
P_{N=ne}\left(  x_{N}-ae\right)  ^{+}  &  =\left(  P_{N=ne}x_{N}%
-P_{N=ne}ae\right)  ^{+}\\
&  =\left(  P_{N=ne}x_{n}-P_{N=ne}ae\right)  ^{+}=P_{N=ne}\left(
x_{n}-ae\right)  ^{+}%
\end{align*}
we deduce that%
\[
P_{N=ne}f(x_{N})=P_{N=ne}-P_{P_{N=ne}\left(  x_{n}-ae\right)  ^{+}}%
e=P_{N=ne}f(x_{n}).
\]
as required. To get the second equality we just apply the first one
for\ $f=\mathbf{1}_{\left\{  k\right\}  }.$

(ii) The proof is similar to (i).
\end{proof}

\begin{lemma}
\label{YY3-I}With the above notations we have:

\begin{enumerate}
\item[(i)] $x_{N}$ and $S_{N}$ are natural elements in $E.$

\item[(ii)] $S_{N}=%
{\textstyle\sum\limits_{k\geq1}}
P_{N\geq ke}x_{k}.$
\end{enumerate}
\end{lemma}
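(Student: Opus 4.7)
The plan is to prove (i) and (ii) by combining Lemma \ref{YY3-G} with the Fubini-type theorem for positive double series from Section 3. The common underlying observation I will use throughout is that whenever $z$ is a natural element, item 3 of the definition of natural element forces the bands $\{B_{z=ne}:n\ge 0\}$ to be pairwise disjoint with join $E$, so that $\sum_{n\ge 0}P_{z=ne}y=y$ for every $y\in E$. Applied to $N$ this gives the identity $P_{N\ge ke}y=\sum_{n\ge k}P_{N=ne}y$, which will be needed in (ii); applied to $x_n$ and $S_n$ it provides the collapsing sums $\sum_{k}P_{x_n=ke}e=e$ and $\sum_k P_{S_n=ke}e=e$, which will be needed in (i).

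For (i) my plan is to decompose, for each $k\ge 0$,
\[
P_{x_N=ke}e=\sum_{n\ge 0}P_{N=ne}P_{x_N=ke}e=\sum_{n\ge 0}P_{N=ne}P_{x_n=ke}e,
\]
where the second equality is Lemma \ref{YY3-G}(i). Summing over $k$ and interchanging the two positive sums by Fubini, I would obtain
\[
\sum_{k\ge 0}P_{x_N=ke}e=\sum_{n\ge 0}P_{N=ne}\sum_{k\ge 0}P_{x_n=ke}e=\sum_{n\ge 0}P_{N=ne}e=e,
\]
using that each $x_n$ is natural; applying $T$ then gives $\sum_k\pi_{x_N}(k)=e$. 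For $S_N$ the same scheme works with Lemma \ref{YY3-G}(ii) in place of (i), provided I first know that each partial sum $S_n$ is itself natural. I would prove this by induction on $n$, using that for natural $x,y$ the band projections satisfy $P_{x+y=ke}=\sum_{i+j=k}P_{x=ie}P_{y=je}$ (since on $B_{x=ie}\cap B_{y=je}$ one has $x+y=(i+j)e$) and then summing over $k$ in the same way.

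For (ii), I would expand by definition
\[
S_N=\sum_{n\ge 1}P_{N=ne}\sum_{k=0}^{n}x_k=\sum_{n\ge 1}\sum_{k=0}^{n}P_{N=ne}x_k,
\]
a double sum of positive elements because natural vectors are non-negative. Reindexing $\{(n,k):n\ge 1,\,0\le k\le n\}$ as $\{(n,k):k\ge 0,\,n\ge\max(k,1)\}$ by Fubini, the $k=0$ contribution vanishes since $x_0=0$, and for $k\ge 1$ the preliminary identity collapses $\sum_{n\ge k}P_{N=ne}x_k$ to $P_{N\ge ke}x_k$, which is the claimed formula. The main technical obstacle in the whole argument is the slightly fussy verification that the class of natural elements is closed under addition (required to make sense of the claim that $S_n$ is natural and hence of Lemma \ref{YY3-G}(ii) applied to it); once that is in hand, everything else reduces to a mechanical application of Lemma \ref{YY3-G} and Fubini.
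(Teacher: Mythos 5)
Your proposal is correct and follows essentially the same route as the paper: decompose along the partition $\left\{ P_{N=ne}\right\}$, apply Lemma \ref{YY3-G}, and interchange the positive double sums by the Fubini-type theorem of Section 3 (using $x_{0}=0$ and the collapse $\sum_{n\geq k}P_{N=ne}=P_{N\geq ke}$ for part (ii)). The only difference is that you explicitly verify that each partial sum $S_{n}$ is itself natural, via closure of natural elements under addition, a point the paper silently subsumes under ``the case of $S_{N}$ is similar''; this is a worthwhile clarification rather than a divergence.
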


\begin{proof}
(i) We will show the requested property only for $x_{N};$ the case of $S_{N}$
is similar. By Lemma \textbf{\ref{YY3-G}, }we get%
\begin{align*}%
{\textstyle\sum\limits_{k\geq0}}
P_{x_{N}=ke}e  &  =%
{\textstyle\sum\limits_{k\geq0}}
{\textstyle\sum\limits_{n\geq0}}
P_{N=ne}P_{x_{N}=ke}e=%
{\textstyle\sum\limits_{k\geq0}}
{\textstyle\sum\limits_{n\geq0}}
P_{N=ne}P_{x_{n}=ke}e\\
&  =%
{\textstyle\sum\limits_{n\geq0}}
P_{N=ne}%
{\textstyle\sum_{k\geq0}}
P_{x_{n}=ke}e=%
{\textstyle\sum\limits_{n\geq0}}
P_{N=ne}e=e.
\end{align*}
\newline This show that $x_{N}$ is natural.

(ii) By definition of $S_{N}$ we have%
\begin{align*}
S_{N}  &  =%
{\textstyle\sum\limits_{n\geq0}}
P_{N=ne}S_{n}=%
{\textstyle\sum\limits_{n\geq0}}
P_{N=ne}%
{\textstyle\sum\limits_{k=1}^{n}}
x_{k}\\
&  =%
{\textstyle\sum\limits_{k\geq1}}
\left(
{\textstyle\sum_{n\geq k}}
P_{N=ne}\right)  x_{k}=%
{\textstyle\sum\limits_{k\geq1}}
P_{N\geq ke}x_{k}.
\end{align*}
\newline This gives the desired formula.
\end{proof}

\begin{theorem}
Assume that $x=x_{1},x_{2},...$ are $T$-equidistributed and $T$-conditional
independent of $N.$ The generating function of $x_{N}$ is given by:%
\[
g_{x_{N}}=T(I-P_{N=0})g_{x}+TP_{N=0}e.
\]
Moreover, if $TP_{N>0}e=e$ then $g_{x_{N}}=g_{x}.$
\end{theorem}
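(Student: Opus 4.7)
The plan is to start directly from the series definition of $g_{x_N}$, decompose along the $T$-independent band projections coming from $N$, and then apply the equidistribution hypothesis term by term. Since $N$ is natural, $\sum_{n\geq 0}P_{N=ne}e=e$, hence
\[
TP_{x_N=ke}\,e=\sum_{n\geq 0}T\!\left(P_{N=ne}P_{x_N=ke}e\right).
\]
By Lemma~\ref{YY3-G}(i) we may replace $P_{x_N=ke}e$ by $P_{x_n=ke}e$ under each projection $P_{N=ne}$. The positivity of every summand will later legitimize the Fubini-type exchange of the $n$ and $k$ summations via the results of Section~3.

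The case $n=0$ must be singled out because $x_0=0$: there $P_{x_0=ke}e=e$ if $k=0$ and $0$ otherwise, contributing $TP_{N=0}e$ to $TP_{x_N=0}e$ and nothing to the remaining coefficients. For $n\geq 1$, the $T$-conditional independence of $N$ and $x_n$ means that $\langle N,R(T)\rangle$ and $\langle x_n,R(T)\rangle$ are $T$-independent closed Riesz subspaces, and the band projections $P_{N=ne}$ and $P_{x_n=ke}$ lie in these subspaces (their images of $e$ are spectral components of $N$ and $x_n$ respectively). Therefore
\[
T\!\left(P_{N=ne}P_{x_n=ke}e\right)=TP_{N=ne}e\cdot TP_{x_n=ke}e=TP_{N=ne}e\cdot\pi_x(k),
\]
the last equality coming from the equidistribution $x_n\overset{d}{=}_T x$.

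Summing over $n\geq 1$ gives $\sum_{n\geq 1}TP_{N=ne}e=Te-TP_{N=0}e=T(I-P_{N=0})e$, hence
\[
TP_{x_N=ke}e=\delta_{k,0}\,TP_{N=0}e+T(I-P_{N=0})e\cdot\pi_x(k).
\]
Multiplying by $s^k$, summing over $k$ and recognizing $\sum_k s^k\pi_x(k)=g_x(s)$ yields the announced formula $g_{x_N}(s)=TP_{N=0}e+T(I-P_{N=0})e\cdot g_x(s)$. The ``moreover'' clause is then immediate: if $TP_{N>0}e=e$, then $TP_{N=0}e=0$, and strict positivity of $T$ forces $P_{N=0}e=0$, so $T(I-P_{N=0})e=e$ and the formula collapses to $g_{x_N}=g_x$.

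The main obstacle is the clean invocation of $T$-independence for the band projections $P_{N=ne}$ and $P_{x_n=ke}$. One has to verify that these projections live in the closed Riesz subspaces generated by $N$ and by $x_n$ over $R(T)$, which follows from the spectral/functional-calculus description of these projections, and then apply the definition of $T$-independence of families of Riesz subspaces recalled in Section~2. Everything else is an exchange of summations justified by positivity, Theorem~\ref{MON} and the Fubini statement of Section~3.
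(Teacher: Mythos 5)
Your proof is correct and follows essentially the same route as the paper's: expand $TP_{x_N=ke}e$ over the bands $P_{N=ne}$, use Lemma \ref{YY3-G}(i) to replace $x_N$ by $x_n$, isolate the $n=0$ term (where $x_0=0$), factor via $T$-independence, invoke equidistribution, and exchange the two sums by positivity. The only cosmetic differences are that you make the $\delta_{k,0}$ bookkeeping and the interpretation of $T(I-P_{N=0})g_x$ as $T(I-P_{N=0})e\cdot g_x(s)$ explicit, and that in the "moreover" clause the appeal to strict positivity is unnecessary since $T(I-P_{N=0})e=Te-TP_{N=0}e=e$ directly.
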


\begin{proof}
We have%
\begin{align*}
g_{x_{N}}(s)  &  =%
{\textstyle\sum\limits_{n\geq0}}
s^{n}TP_{x_{N}=ne}e=%
{\textstyle\sum\limits_{n\geq0}}
s^{n}%
{\textstyle\sum\limits_{k\geq0}}
TP_{N=ke}P_{x_{k}=ne}e\\
&  =%
{\textstyle\sum\limits_{n\geq0}}
s^{n}TP_{N=0}P_{x_{0}=ne}e+%
{\textstyle\sum\limits_{n\geq0}}
s^{n}%
{\textstyle\sum\limits_{k\geq1}}
TP_{N=ke}.TP_{x_{k}=ne}e\\
&  =TP_{N=0}e+%
{\textstyle\sum\limits_{k\geq1}}
TP_{N=ke}.%
{\textstyle\sum\limits_{n\geq0}}
s^{n}TP_{x=ne}e.
\end{align*}
\newline Now if we assume that\ $TP_{N>0}e=e,$ we get\ $TP_{N=0}e=0,$ and then
$g_{x_{N}}(s)=Tg_{x}(s)=g_{x}(s).$
\end{proof}

\begin{theorem}
\label{YY3-J}Let $\left(  E,e,T\right)  $ be a conditional Riesz triple.
Assume that $x_{1},x_{2},...$ are $T$-equidistributed and $T$-conditional
independent of\ $N.$ Then

\begin{enumerate}
\item[(i)] the generating function of $S_{N}$ is given by the following
formula:%
\[
g_{S_{N}}=\widetilde{g}_{N}\circ g_{x}.
\]

\item[(ii)] If $N$ and $x$ belong to $L^{1}\left(  T\right)  ,$ then
$TS_{N}=\left(  TN\right)  .\left(  Tx\right)  .$

\item[(iii)] If $N$ and $x$ belong to $L^{2}\left(  T\right)  ,$ then%
\[
\operatorname*{Var}\nolimits_{T}\left(  S_{N}\right)  =\left(  TN\right)
.\operatorname*{Var}\nolimits_{T}\left(  x\right)  +\operatorname*{Var}%
\nolimits_{T}\left(  N\right)  .\left(  Tx\right)  ^{2},
\]
where,
\[
\operatorname*{Var}\nolimits_{T}\left(  y\right)  :=T\left(  y-Ty\right)
^{2}=Ty^{2}-\left(  Ty\right)  ^{2}.
\]

\end{enumerate}
\end{theorem}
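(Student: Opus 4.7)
For part (i), I would unfold the definition
\[
g_{S_{N}}(s)=\sum_{k\geq 0}s^{k}\,TP_{S_{N}=ke}e
\]
and insert the band partition $e=\sum_{n\geq 0}P_{N=ne}e$ under $T$, giving $TP_{S_{N}=ke}e=\sum_{n}TP_{N=ne}P_{S_{N}=ke}e$. Lemma \ref{YY3-G}(ii) lets me replace $P_{N=ne}P_{S_{N}=ke}$ with $P_{N=ne}P_{S_{n}=ke}$, and the $T$-conditional independence of $N$ and the family $(x_{k})$ then factorises, via formula (\ref{Eq2}), $TP_{N=ne}P_{S_{n}=ke}e=TP_{N=ne}e\cdot TP_{S_{n}=ke}e$. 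Interchanging the two sums using the Fubini statement of Section~3 gives $g_{S_{N}}(s)=\sum_{n}\pi_{N}(n)\,g_{S_{n}}(s)$, and since $x_{1},\ldots,x_{n}$ are $T$-independent and equidistributed, the previous theorem yields $g_{S_{n}}(s)=g_{x}(s)^{n}$. Reading the resulting series as $\widetilde{g}_{N}$ evaluated at the positive element $g_{x}(s)$ gives $g_{S_{N}}=\widetilde{g}_{N}\circ g_{x}$.

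For part (ii), the most economical approach uses Lemma \ref{YY3-I}(ii): $S_{N}=\sum_{k\geq 1}P_{N\geq ke}x_{k}$. Because the summands are positive, Theorem \ref{MON} together with the order continuity of $T$ permits applying $T$ term by term. For each $k$ the projection $P_{N\geq ke}$ lies in $\langle N,R(T)\rangle$ and is therefore $T$-independent of $x_{k}$, so (\ref{Eq2}) gives $T(P_{N\geq ke}x_{k})=TP_{N\geq ke}e\cdot Tx$. Factoring $Tx$ out of the series and then using the tail-sum lemma $TN=\sum_{k\geq 1}TP_{N\geq ke}e$ proved just before Proposition \ref{YY3-F} delivers $TS_{N}=(TN)(Tx)$.

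For part (iii), I would compute $TS_{N}^{2}$ directly. Since the band projections $P_{N=ne}$ are mutually orthogonal and commute with multiplication in the $f$-algebra $E^{u}$, squaring $S_{N}=\sum_{n}P_{N=ne}S_{n}$ collapses the cross terms and leaves $S_{N}^{2}=\sum_{n}P_{N=ne}S_{n}^{2}$. Applying $T$ term by term and using that $P_{N=ne}$ is $T$-independent of $S_{n}^{2}$ gives $TS_{N}^{2}=\sum_{n}\pi_{N}(n)\,TS_{n}^{2}$. For the i.i.d.\ sum, pairwise $T$-independence and (\ref{Eq2}) yield $TS_{n}^{2}=nTx^{2}+n(n-1)(Tx)^{2}$. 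Summing in $n$ and identifying $\sum_{n}n\pi_{N}(n)=TN$ and $\sum_{n}n(n-1)\pi_{N}(n)=TN^{2}-TN$ via Proposition \ref{YY3-F} gives $TS_{N}^{2}=Tx^{2}\cdot TN+(Tx)^{2}(TN^{2}-TN)$. Subtracting $(TS_{N})^{2}=(TN)^{2}(Tx)^{2}$ from (ii) and regrouping produces the advertised variance decomposition.

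The main obstacle is not any single computation but the accumulation of series manipulations: reindexing the double sums, swapping $T$ with infinite sums, and recognising the result as $\widetilde{g}_{N}$ applied to a vector argument. Each step is legitimate because the terms being manipulated are positive, so Theorem \ref{MON} and the Fubini theorem of Section~3 apply; the $L^{1}(T)$ and $L^{2}(T)$ hypotheses in (ii) and (iii) are exactly what guarantee that the final sums land in $E^{u}$ rather than merely in $E^{s}$.
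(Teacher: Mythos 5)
Your argument is correct, and while the overall strategy matches the paper's (reduce everything to the two representations of $S_{N}$ and the independence identity (\ref{Eq2})), the details diverge in an interesting way. For (i) you work directly with the series $\sum_{k}s^{k}TP_{S_{N}=ke}e$ and a Fubini interchange, whereas the paper starts from $g_{S_{N}}(s)=\sum_{n}TP_{N=ne}s^{S_{n}}$, using $g_{y}(s)=Ts^{y}$ from Proposition \ref{YY3-C}(iii) together with Lemma \ref{YY3-G}(ii); both are legitimate, the paper's route avoiding the double sum at the cost of invoking the functional-calculus formula. For (ii) and (iii) you and the paper use the two decompositions of $S_{N}$ in opposite places: the paper computes $TS_{N}$ from the partition $S_{N}=\sum_{n}P_{N=ne}S_{n}$ and $TS_{N}^{2}$ from the tail sum $S_{N}=\sum_{k}P_{N\geq ke}x_{k}$ (which forces it to evaluate the cross-term sum $Z=\sum_{n\neq m}TP_{N\geq ne}P_{N\geq me}e=TN^{2}-TN$), while you compute $TS_{N}$ from the tail sum, leaning on the lemma $TN=\sum_{k}TP_{N\geq ke}e$, and $TS_{N}^{2}$ from the partition, where orthogonality of the bands $B_{N=ne}$ kills the cross terms and leaves the clean identity $TS_{N}^{2}=\sum_{n}\pi_{N}(n)TS_{n}^{2}$. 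Your version of (iii) is arguably tidier, since the combinatorial bookkeeping is absorbed into the elementary formula $TS_{n}^{2}=nTx^{2}+n(n-1)(Tx)^{2}$ rather than into the computation of $Z$. The one point worth making explicit in a full write-up is why $N$ is $T$-independent of $S_{n}^{2}$: since $S_{n}$ is natural, $S_{n}^{2}=\sum_{k}k^{2}P_{S_{n}=ke}e$ lies in $\left\langle S_{n},R\left(T\right)\right\rangle$, which justifies factoring $T\left(P_{N=ne}S_{n}^{2}\right)=\pi_{N}(n)\,TS_{n}^{2}$.
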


\begin{proof}
(i) Using Lemma \ref{YY3-G}.(ii), we get%
\begin{align*}
g_{S_{N}}(s)  &  =%
{\textstyle\sum\limits_{n\geq0}}
TP_{N=ne}s^{S_{n}}=%
{\textstyle\sum\limits_{n\geq0}}
T\left(  s^{S_{n}}.P_{N=ne}e\right) \\
&  =%
{\textstyle\sum\limits_{n\geq0}}
Ts^{S_{n}}.TP_{N=ne}e=%
{\textstyle\sum\limits_{n\geq0}}
\left(  Ts^{x}\right)  ^{n}.TP_{N=ne}e=\widetilde{g}_{N}\left(  g_{x}%
(s)\right)  .
\end{align*}

(ii) As $S_{n}$ and $N$ are $T$-independent we have for each $n,$
$TP_{N=ne}S_{n}=TS_{n}.TP_{N=ne}e.$ Hence%
\begin{align*}
TS_{N}  &  =%
{\textstyle\sum\limits_{n\geq1}}
TP_{N=ne}S_{n}=%
{\textstyle\sum\limits_{n\geq1}}
TS_{n}.TP_{N=ne}e=%
{\textstyle\sum\limits_{n\geq1}}
nTx.TP_{N=ne}e\\
&  =Tx.%
{\textstyle\sum\limits_{n\geq1}}
nTP_{N=ne}e=Tx.TN.
\end{align*}

(iii) We have on one hand,%
\[
\left(  TS_{N}\right)  ^{2}=\left(  Tx\right)  ^{2}.\left(  TN\right)  ^{2}.
\]
and on the other hand we have%
\begin{align*}
T\left(  S_{N}^{2}\right)   &  =%
{\textstyle\sum\limits_{n\geq1}}
Tx_{n}^{2}.TP_{N\geq ne}e+%
{\textstyle\sum\limits_{n\neq m\geq1}}
\left(  Tx_{n}x_{m}\right)  .TP_{N\geq ne}P_{N\geq me}e\\
&  =Tx^{2}.%
{\textstyle\sum\limits_{n\geq1}}
TP_{N\geq ne}e+\left(  Tx\right)  ^{2}.%
{\textstyle\sum\limits_{n\neq m\geq1}}
TP_{N\geq ne}P_{N\geq me}e\\
&  =\left(  Tx^{2}\right)  .\left(  TN+Z\right)  .
\end{align*}
\newline It is enough then to show that $Z:=%
{\textstyle\sum\limits_{n\neq m\geq1}}
TP_{N\geq ne}P_{N\geq me}e=TN^{2}-TN.$ To this end observe that%
\begin{align*}
Z  &  =%
{\textstyle\sum\limits_{k=n\vee m,n\neq m}}
TP_{N\geq ke}e=%
{\textstyle\sum\limits_{m=2}^{\infty}}
2(m-1)TP_{N\geq ke}e\\
&  =2%
{\textstyle\sum\limits_{k>1}}
\left(
{\textstyle\sum\limits_{m=1}^{k-1}}
m\right)  P_{N=ke}e=%
{\textstyle\sum\limits_{k=1}}
\left(  k^{2}-k\right)  P_{N=ke}e=TN^{2}-TN.
\end{align*}
\newline This completes the proof.
\end{proof}

\section{Application to convergence}

It is easily seen that if $x$ and $y$ are natural elements in $E$ having the
same generating function then $x$ and $y$ have the same $T$-distribution,
namely $TP_{x=ke}e=TP_{y=ke}e$ for $k=0,1,...$ Several notions have been
generalized from the classical theory of probability and stochastic processes
to the setting of Riesz spaces. One of the challenges is to give a reasonable
definition of convergence in distribution. Many equivalent definitions are
known in the literature. In \cite{b-23}, David Williams defines the
convergence in distribution via distribution function: A sequence $\left(
X_{n}\right)  $ of real random variables with distribution functions $\left(
F_{X_{n}}\right)  $ is said to converge in distribution to $X$ if $F_{X_{n}%
}\left(  t\right)  \longrightarrow F\left(  t\right)  $ for every
$t\in\operatorname*{Cont}\left(  F\right)  ,$ where $F_{X}$ is the
distribution function of $X$ and $\operatorname*{Cont}\left(  F\right)  $ is
the set of points on which $F$ is continuous. According to \cite[Example
2.4]{L-329}, there is a Riesz space $E$ and an element $x$ in $E$ such that
$F_{x}$ is discontinuous everywhere. So the definition above can not be
translated to the theory of Riesz spaces. We suggest, however, the following
definition in the case of natural elements.

\begin{definition}
Let $\left(  E,e,T\right)  $ be a conditional Riesz triple and $\left(
x_{\alpha}\right)  $ a net of neutral elements in $E.$ We saythat the net
$\left(  x_{\alpha}\right)  _{\alpha\in A}$ converges in $T$-distribution to a
natural element $x$ in $E$\ if $TP_{x_{\alpha}=ke}e\overset{o}{\longrightarrow
}TP_{x=ke}e$ for all $k\geq0.$ We write $x_{\alpha}\overset{T-d}%
{\longrightarrow}x.$
\end{definition}

\begin{proposition}
Let $\left(  E,e,T\right)  $ be a conditional Riesz triple, $\left(
x_{\alpha}\right)  $ a net of natural elements in $E$ and $x\in E.$ Then the
following statements are equivalent.

\begin{enumerate}
\item[(i)] The net $\left(  x_{\alpha}\right)  $ converges to $x$ in $T$-distribution.

\item[(ii)] The net $\left(  g_{x_{\alpha}}\right)  $ converges to $g_{x}$
pointwise on $\left[  0,1\right]  $.
\end{enumerate}
\end{proposition}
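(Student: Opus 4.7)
The plan is to prove (i)$\Rightarrow$(ii) by an application of the Dominated Convergence Theorem for series (Theorem \ref{DOM}), and (ii)$\Rightarrow$(i) by induction on the index $k$, reading off the Taylor coefficients of the generating function at $0$ one at a time. For (i)$\Rightarrow$(ii), fix $s\in[0,1]$. At $s=1$ both $g_{x_\alpha}(1)$ and $g_x(1)$ equal $e$ since every $x_\alpha$ and $x$ is natural, so there is nothing to prove. For $s\in[0,1)$ the hypothesis yields $s^n\pi_{x_\alpha}(n)\overset{o}{\longrightarrow}s^n\pi_x(n)$ for each $n$, while the uniform estimate $s^n\pi_{x_\alpha}(n)\leq s^n e$ together with the order convergence of $\sum_n s^n e=(1-s)^{-1}e$ in $E$ puts us in the hypotheses of Theorem \ref{DOM}; exchanging the limit with the summation delivers $g_{x_\alpha}(s)\overset{o}{\longrightarrow}g_x(s)$.

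The converse is the substantive direction. I proceed by induction on $k\geq 0$. The base case is immediate from $g_{x_\alpha}(0)=\pi_{x_\alpha}(0)$ and $g_x(0)=\pi_x(0)$ applied to the hypothesis at $s=0$. For the inductive step, supposing $\pi_{x_\alpha}(j)\overset{o}{\longrightarrow}\pi_x(j)$ for every $j<k$, define for $s\in(0,1)$
\[
\tilde{g}_\alpha(s):=s^{-k}\Bigl(g_{x_\alpha}(s)-\sum_{j=0}^{k-1}s^j\pi_{x_\alpha}(j)\Bigr)=\sum_{n\geq k}s^{n-k}\pi_{x_\alpha}(n),
\]
and $\tilde{g}(s)$ analogously. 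Combining (ii) with the inductive hypothesis gives $\tilde{g}_\alpha(s)\overset{o}{\longrightarrow}\tilde{g}(s)$ for every $s\in(0,1)$.

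The key estimate is $0\leq\tilde{g}_\alpha(s)-\pi_{x_\alpha}(k)=\sum_{n\geq k+1}s^{n-k}\pi_{x_\alpha}(n)\leq\frac{s}{1-s}e$, and similarly for $\tilde{g}$ and $\pi_x(k)$; the triangle inequality then yields
\[
|\pi_{x_\alpha}(k)-\pi_x(k)|\leq|\tilde{g}_\alpha(s)-\tilde{g}(s)|+\frac{2s}{1-s}e.
\]
Since all nets in play sit in the order interval $[-2e,2e]$ and $E$ is Dedekind complete, $\limsup_\alpha$ exists; taking it on both sides gives $\limsup_\alpha|\pi_{x_\alpha}(k)-\pi_x(k)|\leq\frac{2s}{1-s}e$ for every $s\in(0,1)$, and since the infimum of the right-hand side over $s$ is $0$, we conclude $\pi_{x_\alpha}(k)\overset{o}{\longrightarrow}\pi_x(k)$. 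The only delicate point I foresee is verifying the order $\limsup$ manipulation cleanly, but this is handled purely by the order boundedness of the nets and Dedekind completeness of $E$, requiring no further functional-calculus machinery.
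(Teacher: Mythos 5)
Your proposal is correct and follows essentially the same route as the paper: the forward direction via the Dominated Convergence Theorem for series (Theorem \ref{DOM}) with the dominating series $\sum s^{n}e$, and the converse by induction on $k$, where your quantity $\tilde{g}_{\alpha}(s)-\tilde{g}(s)$ is exactly the paper's $A_{n}^{\alpha}(s)$ and the same tail bound of order $\frac{s}{1-s}e$ is sent to $0$ as $s\downarrow0$. No substantive difference.
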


\begin{proof}
Let us denote $TP_{x_{\alpha}=ke}e$ briefly by $p_{k,\alpha}$, and
$TP_{x=ke}e$ by $p_{k}.$

(i) $\Longrightarrow$ (ii) The conclusion is trivial for $s=1.$ Let $s$
be\ fixed in $[0,1)$. We have for each $k,$ $\lim\limits_{\alpha}$
$s^{k}p_{k,\alpha}=s^{k}p_{k}$ and $\left\vert s^{k}p_{k,\alpha}\right\vert
\leq s^{k}e.$ Now as the series $\sum\limits_{k\geq0}s^{k}e$ is order
convergent in $E_{e}$ we can apply Theorem \ref{DOM} to get\ $g_{x_{\alpha}%
}(s)\overset{o}{\longrightarrow}g_{x}(s).$

(ii) $\Longrightarrow$ (i) The proof will be done by induction on
$k$.\ Observe first that\ $p_{0}^{(\alpha)}=g_{x_{\alpha}}\left(  0\right)
\overset{o}{\longrightarrow}g_{x}\left(  0\right)  =p_{0}$. Assume by
induction that $p_{i}^{(\alpha)}\overset{o}{\longrightarrow}p_{i}$ for all
$i=0,1,...k$ and let $s\in\left(  0,1\right)  .$ Define
\[
A_{n}^{\alpha}(s)=\left\vert p_{k+1}^{(\alpha)}-p_{k+1}+\sum\limits_{i\geq
k+2}\left(  p_{i}^{(\alpha)}-p_{i}\right)  s^{i-(k+1)}\right\vert .
\]
\newline Then%
\begin{align*}
A_{n}^{\alpha}\left(  s\right)   &  =\left\vert \frac{g_{x_{\alpha}}%
(s)-\sum_{i=0}^{k}p_{i}^{(\alpha)}s^{i}}{s^{k+1}}-\frac{g_{x}(s)-\sum
_{i=0}^{k}p_{i}s^{i}}{s^{k+1}}\right\vert \\
&  \leq\left\vert \frac{g_{x_{\alpha}}(s)-g_{x}(s)}{s^{k+1}}\right\vert
+\left\vert \frac{\sum_{i=0}^{k}p_{i}^{(\alpha)}s^{i}-\sum_{i=0}^{k}p_{i}%
s^{i}}{s^{k+1}}\right\vert \overset{o}{\longrightarrow}0.
\end{align*}
It follows that%
\[
\limsup_{\alpha}\left\vert p_{k+1}^{(\alpha)}-p_{k+1}\right\vert \leq\left(
\frac{s}{1-s}\right)  e+\limsup_{\alpha}A_{n}^{\alpha}(s),
\]
\ \ for all $s\in\left(  0,1\right)  .$ Hence $p_{k+1}^{(\alpha)}\overset
{o}{\longrightarrow}p_{k+1}$ as required and the proof is complete.
\end{proof}

\begin{theorem}
Suppose that $N\backsim_{T}\mathcal{P}(g)$, and that $x_{1},x_{2},...$ are
$T$-independent, identically distributed with Bernoulli distribution
$\mathcal{B}\left(  p\right)  $ and independent of $N$. Let $S_{n}%
=\sum\limits_{i=1}^{n}x_{i}$. Then $S_{N}$ has the Poisson distribution with
parameter $pg$.
\end{theorem}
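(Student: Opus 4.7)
The plan is to reduce everything to a single generating‐function computation and then invoke uniqueness of distribution (stated at the beginning of Section~5: two natural elements with the same generating function are $T$-equidistributed).

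First I would verify that the hypotheses of Theorem~\ref{YY3-J}(i) are satisfied: the $x_i$ are $T$-equidistributed by assumption, and they are $T$-conditionally independent of $N$, so
\[
g_{S_N}(s)=\widetilde{g}_N\!\left(g_x(s)\right),\qquad s\in[0,1].
\]
Next I would plug in the two ingredients already computed in the paper's example list. Since each $x_i\sim_T\mathcal{B}(p)$, Proposition~\ref{YY3-C} and the example give
\[
g_x(s)=sp+e-p=e+p(s-e\cdot 1)=e+p(s-1).
\]
For the Poisson law, one needs not merely the scalar generating function $g_N$ but its \emph{generalized} version $\widetilde{g}_N:E_+\to E^s$. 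Starting from $\pi_N(k)=g^k e^{-g}/k!$ and using that $R(T)$ is an $f$-subalgebra in which $g$ and any $u\in R(T)$ commute, I obtain
\[
\widetilde{g}_N(u)=\sum_{k\ge 0}\frac{g^k}{k!}e^{-g}\,u^k
=e^{-g}\sum_{k\ge 0}\frac{(gu)^k}{k!}=e^{-g}\exp(gu)=\exp\!\bigl(g(u-e)\bigr),
\]
where the manipulations are justified by the functional calculus facts from Section~2 (in particular $\exp[n\log x]=x^n$ and more generally the product/composition rule of the unnamed Theorem after Lemma~\ref{YY3-A}), together with Fubini/monotone convergence for the series rearrangement.

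Combining these two computations, and noting that $g_x(s)\in R(T)$ so that the previous identity applies with $u=g_x(s)$,
\[
g_{S_N}(s)=\widetilde{g}_N\bigl(e+p(s-1)\bigr)=\exp\!\bigl(g\cdot p(s-1)\bigr)=\exp\!\bigl((s-1)(pg)\bigr),
\]
which is exactly the generating function of $\mathcal{P}(pg)$ listed in the example. Since $S_N$ is natural by Lemma~\ref{YY3-I}(i) and generating functions determine the distribution of natural elements, we conclude $S_N\sim_T\mathcal{P}(pg)$.

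The only genuinely non‐routine point is the justification of the closed form $\widetilde{g}_N(u)=\exp(g(u-e))$ in $E^s$: one has to make sure the sum $\sum_k (gu)^k/k!$ order‐converges to $\exp(gu)$ in the sup‐completion, and that the scalar multiplication by $e^{-g}$ commutes with the infinite sum. These follow from the Monotone/Dominated Convergence Theorems of Section~3 applied to the positive series $\sum (gu)^k/k!$, and from the $f$-algebra structure of $R(T)\subset E^u$. Once that identity is in hand, the rest is a substitution.
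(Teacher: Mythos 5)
Your proposal is correct and follows essentially the same route as the paper: apply Theorem \ref{YY3-J}(i) to get $g_{S_N}=\widetilde{g}_N\circ g_{x}$, substitute the Bernoulli generating function $e+p(s-1)$ and the Poisson identity $\widetilde{g}_N(u)=\exp(g(u-e))$, and read off $\exp((s-1)pg)$. The paper simply asserts the closed form of $\widetilde{g}_N$ without the convergence justification you supply, so your version is the same argument written out more carefully.
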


\begin{proof}
By Theorem \ref{YY3-J},%
\begin{align*}
g_{S_{N}}(s)  &  =\widetilde{g}_{N}\left(  g_{x_{1}}(s)\right)  =\exp(\left(
g_{x_{1}}(s)-e\right)  g)\\
&  =\exp(\left(  e-p+sp-e\right)  g)=\exp(\left(  -p+sp\right)  g)\\
&  =\exp(-pg+spg)=\exp(-\left(  1-s\right)  pg).
\end{align*}
This shows that $S_{N}$ has the Poisson distribution with parameter $pg$ as desired.
\end{proof}

As the function $\exp$ is convex on $\mathbb{R}$ it follows from
\cite[Corollary 4.3]{L-06} that $\exp\left(  x\right)  $ is well defined in
$E^{s}$ for all $x\in E.$ Observe that if $x$ is positive then $\exp\left(
x\right)  =\sup\limits_{n}v_{n},$ where $v_{n}=\sum\limits_{k=0}^{n}%
\dfrac{x^{k}}{k!}.$ We will show that $\exp x$ is in fact in $E^{u},$ the
universal completion of $E.$ To this end it is sufficient to prove that
$\left(  v_{n}\right)  $ is unbounded order Cauchy (see \cite[Theorem
3.10]{L-65} or \cite[Theorem 17]{L-444}). Now observe that for $p\leq q,$ and
$u\in E^{+},$%
\begin{align*}
\left\vert v_{p}-v_{q}\right\vert \wedge u  &  =P_{\left(  x-te\right)  ^{+}%
}\left[  \left(  v_{q}-v_{p}\right)  \wedge u\right]  +P_{\left(  x-te\right)
^{+}}^{d}\left[  \left(  v_{q}-v_{p}\right)  \wedge u\right] \\
&  \leq P_{\left(  x-te\right)  ^{+}}u+%
{\textstyle\sum\limits_{k=p+1}^{q}}
\left(  t^{k}/k!\right)  e.
\end{align*}
Hence%
\[
\limsup\limits_{p,q}\left[  \left\vert v_{p}-v_{q}\right\vert \wedge u\right]
\leq P_{\left(  x-te\right)  ^{+}}u.
\]
As this holds for every $t\geq0$ we deduce that $\left(  v_{n}\right)  $ is
unbounded order Cauchy as required. It should be noted that if $x\in E_{e},$
the ideal generated by $e,$ then one can work in that ideal which is lattice
isomorphic to some $C\left(  K\right)  $-space (see for instance
\cite[Theorems 6.2 and 6.8]{b-2538}) and then use different tool which may
seem easier. In both cases the formula $\exp\left(  x\right)  \exp\left(
-x\right)  =e$ is valid. In the sequel we extend a useful result verified by
the exponentiation to the setting of Riesz spaces.

\begin{lemma}
\label{YY3-E}Let $E$ be a Dedekind complete Riesz space with weak order unit
$e$ and $\left(  x_{n}\right)  $ a sequence in $E$ such that $x_{n}^{n}\in E$
for each $n.$ Then the following hold.

\begin{enumerate}
\item[(i)] If $x_{n}\overset{o}{\longrightarrow}0$ then $x_{n}^{n}\overset
{uo}{\longrightarrow}0$.

\item[(ii)] If\ $x_{n}\overset{o}{\longrightarrow}x$ \ in $E_{e}$ then
$\left(  1+\dfrac{x_{n}}{n}\right)  ^{n\text{ \ }}\overset{o}{\longrightarrow
}\exp(x)$ in $E^{u}.$
\end{enumerate}
\end{lemma}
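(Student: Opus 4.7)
The plan splits along the two parts of the lemma, and both arguments are variations of the spectral-cut technique used just above the lemma to verify that $\exp(x)$ lies in $E^{u}$.

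For (i), fix a dominating sequence $y_{n}\downarrow 0$ with $|x_{n}|\le y_{n}$. For $u\in E_{+}$ and $t\in(0,1)$, set $P_{n}:=P_{(y_{n}-te)^{+}}$. The standard band splitting (on $P_{n}^{d}$ one has $y_{n}\le te$, hence $y_{n}^{n}\le t^{n}e$) gives
\[
|x_{n}^{n}|\wedge u\le y_{n}^{n}\wedge u\le t^{n}e+P_{n}u.
\]
The first summand vanishes in order since $t<1$. For the second, the identity $P_{n}(y_{n}-te)=(y_{n}-te)^{+}\ge 0$ yields $tP_{n}e\le P_{n}y_{n}\le y_{n}\downarrow 0$; since $e$ is a weak order unit and $P_{n}u\wedge ke=P_{n}u\wedge kP_{n}e\le kP_{n}e\le ky_{n}/t$, a routine $\limsup$ argument forces $\limsup_{n}P_{n}u=0$. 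Combining, $|x_{n}^{n}|\wedge u\overset{o}{\longrightarrow}0$ for every $u\in E_{+}$, which is precisely $x_{n}^{n}\overset{uo}{\longrightarrow}0$.

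For (ii), work inside the unital $f$-algebra $E_{e}$, which is lattice-isomorphic to a $C(K)$-space, and fix $M$ with $|x_{n}|,|x|\le Me$ for large $n$. The strategy is to mimic the real-valued proof: first show $n\log(e+x_{n}/n)\overset{o}{\longrightarrow}x$, then transfer through $\exp$. By functional calculus, the Taylor expansion $\log(e+z)=z+z^{2}h(z)$ is valid for $|z|\le\tfrac{1}{2}e$ with $h$ bounded, so for $n$ large
\[
n\log(e+x_{n}/n)=x_{n}+\frac{x_{n}^{2}}{n}\,h(x_{n}/n),
\]
whose error term is order-dominated by $(CM^{2}/n)e\to 0$. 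Hence $n\log(e+x_{n}/n)\overset{o}{\longrightarrow}x$, and the functional-calculus identity $\exp(n\log(e+z))=(e+z)^{n}$ combined with order continuity of $\exp$ on order-bounded subsets of $E_{e}$ yields $(e+x_{n}/n)^{n}\overset{o}{\longrightarrow}\exp(x)$ in $E^{u}$.

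The main obstacle is the last continuity step for $\exp$. I would derive it from the crude Lipschitz bound $|z^{k}-w^{k}|\le kM^{k-1}|z-w|$ valid for $|z|,|w|\le Me$: summing over the Taylor series gives $|\exp(z)-\exp(w)|\le e^{M}|z-w|$, after which the claim follows immediately from $x_{n}-x\overset{o}{\longrightarrow}0$ together with the Dominated Convergence Theorem for series (Theorem~\ref{DOM}) to justify interchanging the order limit with the summation. An alternative route avoids $\log$ altogether by expanding $(e+x_{n}/n)^{n}$ binomially and matching against $\sum_{k}x^{k}/k!$ term by term, but this requires juggling three simultaneous errors (tail of $\exp x$, the coefficient gap $\binom{n}{k}/n^{k}\to 1/k!$, and $x_{n}^{k}\to x^{k}$), so the $\log$/$\exp$ route is cleaner.
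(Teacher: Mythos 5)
Your proof of (i) is essentially the paper's: the same spectral cut $P_{(\,\cdot\,-te)^{+}}$, the same two-band estimate $t^{n}e+P_{n}u$, and the same observation that $tP_{n}e\le y_{n}$; the paper merely reduces to the single test element $u=e$ by invoking \cite[Corollary 3.5]{L-65} instead of running your $\limsup$ argument for a general $u\in E_{+}$, so the two arguments are interchangeable. For (ii), however, you take a genuinely different route. The paper expands $\left(e+x_{n}/n\right)^{n}=\sum_{k}\binom{n}{k}x_{n}^{k}/n^{k}$, notes the termwise domination $\left|\binom{n}{k}x_{n}^{k}/n^{k}\right|\le (M^{k}/k!)e$ together with the termwise order limits $x^{k}/k!$, and concludes in one stroke from Theorem \ref{DOM} --- this is exactly the ``alternative route'' you describe and reject as requiring the juggling of three simultaneous errors; in fact the dominated convergence theorem for series absorbs all three at once (the tail, the coefficient gap, and $x_{n}^{k}\to x^{k}$), so it is the shorter argument here. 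Your $\log$/$\exp$ route is also correct, but it leans on more functional calculus: well-definedness of $\log$ on $\{u:u\ge\alpha e\}$, the identity $\exp(n\log u)=u^{n}$ (which the paper does record), the expansion $\log(e+z)=z+z^{2}h(z)$ for $|z|\le\frac{1}{2}e$, and the Lipschitz bound $|\exp z-\exp w|\le e^{M}|z-w|$ on order-bounded sets, all of which do hold in $E_{e}\cong C(K)$, so there is no gap. What your route buys is a statement of independent interest (order Lipschitz continuity of $\exp$ on order-bounded subsets of $E_{e}$) and a proof that visibly parallels the scalar one; what it costs is the extra layer of composition identities that the paper's two-line dominated-convergence argument avoids entirely.
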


\begin{proof}
(i)\ As $e$ is a weak order unit it is sufficient to prove that $x_{n}%
^{n}\wedge e\overset{o}{\longrightarrow}0$ \cite[ Corollary 3.5]{L-65}. Let us
fix a real $t$ in $\left(  0,1\right)  .$ As $\left(  x_{n}\right)  $ is order
null, $P_{\left(  x_{n}-te\right)  ^{+}}e$ is order null as well. We derive
that%
\begin{align*}
x_{n}^{n}\wedge e  &  =\left(  x_{n}\wedge e\right)  ^{n}=\left(  P_{\left(
x_{n}-e\right)  ^{+}}\left(  x_{n}\wedge e\right)  +P_{\left(  x_{n}%
-te\right)  ^{+}}^{d}\left(  x_{n}\wedge e\right)  \right)  ^{n}\\
&  =P_{\left(  x_{n}-te\right)  ^{+}}\left(  x_{n}\wedge e\right)
^{n}+P_{\left(  x_{n}-te\right)  ^{+}}^{d}\left(  x_{n}\wedge e\right)  ^{n}\\
&  \leq P_{\left(  x_{n}-te\right)  ^{+}}e+P_{\left(  x_{n}-te\right)  ^{+}%
}^{d}x_{n}^{n}\leq P_{\left(  x_{n}-te\right)  ^{+}}e+t^{n}e.
\end{align*}
This shows that $x_{n}^{n}\wedge e\overset{o}{\longrightarrow}0$ and then (i)
is proved.

(ii) Write $\left(  1+\dfrac{x_{n}}{n}\right)  ^{n}=\sum\limits_{k=0}^{\infty
}f_{n}\left(  k\right)  $ where
\[
f_{n}\left(  k\right)  =\left\{
\begin{array}
[c]{cc}%
\dbinom{n}{k}\dfrac{x_{n}^{k}}{n^{k}} & \text{if }0\leq k\leq n\\
0 & \text{otherwise}%
\end{array}
\right.  .
\]
Let $M$ be an upper bound of $\left(  \left\vert x_{n}\right\vert \right)  .$
Then it is quite easy to see that $\left\vert f_{n}\left(  k\right)
\right\vert \leq\dfrac{M^{k}}{k!}$ and $f_{n}\left(  k\right)  \overset
{o}{\longrightarrow}\dfrac{x^{k}}{k!}$ as $n\longrightarrow\infty.$ It follows
from Theorem \ref{DOM} that $\left(  1+\dfrac{x_{n}}{n}\right)  ^{n}%
\overset{o}{\longrightarrow}e^{x}$ in $E^{u}$ as required.
\end{proof}

\begin{remark}
In (i) the order convergence can not be expected; it is only true in $E^{u}$
as order convergence and unbounded order convergence agree in $E^{u}$ for
sequences by a result of Grobler (see \cite[Corollary 3.12]{L-65}). The order
convergence of $\left(  x_{n}\right)  $ can not guarantee the order
boundedness of the sequence $\left(  x_{n}^{n}\right)  .$ Take for instance
$E=\ell^{\infty}$ and $x_{n}=\left(  1+1/\sqrt{n}\right)  e_{n},$ where
$\left(  e_{n}\right)  $ is the standard basis. Then $x_{n}\overset
{o}{\longrightarrow}0,$ however $x_{n}^{n}\geq\sqrt{n}e_{n},$ hence $\left(
x_{n}^{n}\right)  $ is not order bounded in $\ell^{\infty}.$
\end{remark}

The following theorem generalizes a well-known result in probability theory
(see for instance, \cite[Example 9.5.2]{b-1859}). A Riesz space version of
this result has been proved by Kuo, Vardy, and Watson \cite[Theorem 5.6]%
{L-02}, which is a particular case of our result.

\begin{theorem}
Let $\left(  E,e,T\right)  $ be a conditional Riesz triple and $\left(
y_{n}\right)  $ a sequence of natural elements in $E.$ Assume that $y_{n}$ has
a Binomial $T$-distribution $\mathcal{B}(n,p_{n})$ such that $np_{n}%
\overset{o}{\longrightarrow}g$ for some $g$ with $0<g\in E_{e}^{+}.$ Then
$\left(  y_{n}\right)  $ converges in $T$-distribution to $\mathcal{P}\left(
g\right)  $, that is,
\[
TP_{y_{n}=ke}e\overset{o}{\longrightarrow}\dfrac{1}{k!}g^{n}\exp\left(
-g\right)
\]
for all integer $k.$
\end{theorem}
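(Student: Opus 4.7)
The strategy is to pass to generating functions, use the ``$(1+x_n/n)^n \to \exp x$'' result (Lemma \ref{YY3-E}(ii)) to identify the limit with the generating function of $\mathcal{P}(g)$, and then invoke the equivalence between convergence in $T$-distribution and pointwise order convergence of generating functions on $[0,1]$ proved in the preceding proposition. (The exponent $n$ in the statement is a misprint for $k$, since the target mass function is that of $\mathcal{P}(g)$.)

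First, by the second item of the example following Proposition \ref{YY3-C}, the generating function of $y_n \sim \mathcal{B}(n,p_n)$ is
\[
g_{y_n}(s) \;=\; (s p_n + e - p_n)^n \;=\; \left(e + \frac{n p_n (s-1)}{n}\right)^n,
\qquad s \in [0,1].
\]
Fix $s \in [0,1]$. Since $np_n \overset{o}{\longrightarrow} g$ in $E_e$ by hypothesis (note $0 < g \in E_e^+$), the sequence $x_n := np_n(s-1)$ lies eventually in $E_e$ and satisfies $x_n \overset{o}{\longrightarrow} (s-1)g$ in $E_e$. Lemma \ref{YY3-E}(ii) then yields
\[
g_{y_n}(s) \;=\; \Bigl(e + \tfrac{x_n}{n}\Bigr)^{\!n}
\;\overset{o}{\longrightarrow}\;
\exp\bigl((s-1)g\bigr) \quad \text{in } E^u.
\]

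Now the third item of the same example identifies $\exp((s-1)g)$ as exactly $g_x(s)$ where $x \sim \mathcal{P}(g)$. Thus $g_{y_n}(s) \overset{o}{\longrightarrow} g_x(s)$ pointwise on $[0,1]$, and the equivalence ``$x_\alpha \overset{T\text{-}d}{\longrightarrow} x \Longleftrightarrow g_{x_\alpha} \to g_x$ pointwise on $[0,1]$'' (proved above via the Dominated Convergence Theorem for series) gives $y_n \overset{T\text{-}d}{\longrightarrow} x$. Unpacking the definition of convergence in $T$-distribution yields $TP_{y_n = ke}e \overset{o}{\longrightarrow} \frac{g^k}{k!}\exp(-g)$ for every integer $k\geq 0$, as required.

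The only delicate point is ensuring that Lemma \ref{YY3-E}(ii) genuinely applies: we need $x_n = np_n(s-1)$ to order converge in the ideal $E_e$ (not merely in $E^u$) and to have $x_n^n \in E$ so that the binomial expansion makes sense. The first holds because $g \in E_e$ and $np_n \to g$ in order, which forces $(np_n)$ to be eventually contained in a principal ideal of $E_e$; the second is automatic since $y_n \in E$ and $g_{y_n}(s) \in E$ for $s \in [0,1]$. Everything else is bookkeeping on top of the generating-function machinery already established.
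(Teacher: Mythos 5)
Your proof is correct and follows essentially the same route as the paper's: compute $g_{y_n}(s)=(e+(s-1)np_n/n)^n$, apply Lemma \ref{YY3-E}(ii) with $x_n=(s-1)np_n\to(s-1)g$ to get $g_{y_n}(s)\to\exp((s-1)g)$, and conclude via the equivalence between pointwise convergence of generating functions and convergence in $T$-distribution. You also correctly spot that $g^n$ in the statement should read $g^k$, and your extra care about the hypotheses of Lemma \ref{YY3-E}(ii) is a welcome addition the paper leaves implicit.
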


\begin{proof}
We have for all $s\in\left[  0,1\right]  ,$%
\[
g_{y_{n}}(s)=\left(  e-\left(  1-s\right)  p_{n}\right)  ^{n}=\left(
e-\dfrac{\left(  1-s\right)  np_{n}}{n}\right)  ^{n},
\]
and by applying Lemma \ref{YY3-E} to $x_{n}=\left(  1-s\right)  np_{n}$
and\ $x=\left(  1-s\right)  g$ we get%
\[
g_{y_{n}}(s)\overset{o}{\longrightarrow}\exp(\left(  s-1\right)
g),\ \text{for all }0\leq s\leq1,
\]
and the desired result follows.
\end{proof}

\end{document}